\newtheorem{theorem}{Theorem}[section]
\newtheorem{lemma}[theorem]{Lemma}
\newtheorem{fact}[theorem]{Fact}
\newtheorem{corollary}[theorem]{Corollary}
\theoremstyle{definition}
\newtheorem{definition}[theorem]{Definition}
\theoremstyle{remark}
\newtheorem{remark}[theorem]{Remark}
\newtheorem{example}[theorem]{Example}
\numberwithin{equation}{section}
\setlist{listparindent=0pt,parsep=3pt}
\newcommand{\TitleWithUrl}[1]{\IfEmptyBibField{doi}%
  {\IfEmptyBibField{url}{\textit{#1}}%
    {\IfEmptyBibField{eprint}{\href {\BibField{url}}{\textit{#1}}}{\textit{#1}}}%
    }%
  {\href {https://doi.org/\BibField{doi}}{\textit{#1}}}}
\renewcommand{\eprint}[1]{\IfEmptyBibField{url}{\url{#1}}%
  {\href {\BibField{url}}{#1}}}
\begin{document}

\title{Monodromy of Darboux transformations of polarised curves}

\author{Joseph Cho}
\address[Joseph Cho]{Global Leadership School, Handong Global University, 558 Handong-ro Buk-gu, Pohang, Gyeongsangbuk-do, 37554, Republic of Korea}
\email{jcho@handong.edu}

\author{Katrin Leschke}
\address[Katrin Leschke]{School of Computing and Mathematical Sciences, University of Leicester, University Road, Leicester LE1 7RH, United Kingdom}
\email{k.leschke@leicester.ac.uk }

\author{Yuta Ogata}
\address[Yuta Ogata]{Department of Mathematics, Faculty of Science, Kyoto Sangyo University, Motoyama, Kamigamo, Kita-ku, Kyoto-City, 603-8555, Japan}
\email{yogata@cc.kyoto-su.ac.jp}

\subjclass[2020]{Primary: 53A04, Secondary: 53A31, 58J72}
\keywords{polarised curve, monodromy, Darboux transformation, resonance point}

\begin{abstract}
	We show that every finite type polarised curve in the conformal $2$-sphere with a polynomial conserved quantity admits a resonance point, under a non-orthogonality assumption on the conserved quantity.
	Using this fact, we deduce that every finite type curve polarised by space form arc-length in the conformal $2$-sphere admits a resonance point, possibly on a multiple cover.
\end{abstract}

\maketitle

\section{Introduction}
In surface theory, examples with global properties such as constant curvature, symmetry, completeness, embeddedness, compactness, non-trivial topology, are highly sought after, with emphasis being given to those examples that exhibit multiple of these properties.
Two of the perhaps most well-known examples constructed in modern surface theory, the Costa--Hoffman--Meeks surface \cite{costa_ExampleCompleteMinimal_1984, hoffman_CompleteEmbeddedMinimal_1985, hoffman_EmbeddedMinimalSurfaces_1990} and the Wente tori \cite{wente_CounterexampleConjectureHopf_1986}, both satisfy multiple of such global properties: the former being globally embedded complete minimal surfaces with non-trivial topology, while the latter are compact constant mean curvature surfaces with the topology of a torus.

Transformation theory has also been a useful tool in discovering examples with multiple global properties.
For example, constant mean curvature (cmc) cylinders of finite type known as bubbletons \cite{sterling_ExistenceClassificationConstant_1993} are found from the circular cylinder, while Bernstein tori are found via from the homogeneous tori in the $3$-sphere \cite{bernstein_NonspecialNoncanalIsothermic_2001}, where both examples arise from the use of integrable transformation theory of surfaces.
In particular, both surfaces are well-known examples of \emph{isothermic surfaces}, an integrable class of surfaces \cite{cieslinski_IsothermicSurfaces$mathbf_1995} comprised of surfaces that admit conformal curvature line coordinates away from umbilics, and the transformations utilized for their construction can be interpreted as Darboux transformations of isothermic surfaces \cite{darboux_SurfacesIsothermiques_1899}.
As the subject of isothermic tori and the methods of their construction has received renewed interest due to its central role in the resolution of the global Bonnet problem \cite{bobenko_CompactBonnetPairs_2021}, one can expect Darboux transformations to become a useful method for the construction of various isothermic tori.

The integrable structure and transformation theory of isothemic surfaces is closely tied to those of \emph{polarised curves} \cite{burstall_SemidiscreteIsothermicSurfaces_2016}.
Several explicit examples of cmc cylinders and isothermic cylinders \cite{cho_GeneralisedBianchiPermutability_2022, cho_NewExplicitCMC_2022} are constructed via Darboux transformations from surfaces of revolutions; in both examples, the transformation theory of polarised curves played an implicit yet central role.
The key ingredient in the construction of these examples is the existence of a \emph{resonance point} for the circular curvature lines viewed as polarised curves.

Typically a Darboux transformation is locally defined via a Riccati-type equation involving choices of a spectral parameter and an initial condition, and the closure of the Darboux transform depends on these choices.
However, for a special spectral parameter, a so-called resonance point, the Darboux transform remains closed regardless of the choice of the initial condition.
In the case of a circle, the Riccati-type equation can be solved explicitly, so that the existence of the resonance point can be checked directly.
The use of this resonance point as the spectral parameter then leads to the explicit construction of the aforementioned cmc cylinders and isothermic cylinders via Darboux transformations.

These examples suggest that the reduction of the integrable structure of isothermic surfaces to those of polarised curves could be a powerful tool in the construction of isothermic tori by way of Darboux transformations.
However, this method remains largely unexplored, mainly due to the lack of known closed polarised curves that admit resonance points.

In this paper, we show that any closed curve in the conformal $2$-sphere\footnote{This includes all $2$-dimensional Riemannian space forms with constant sectional curvature, such as Euclidean plane, hyperbolic plane, or the $2$-sphere.} with a certain polarisation admits resonance points, under the assumption that they are of \emph{finite type} (see, for example, \cite{hitchin_HarmonicMaps$2$torus_1990, pinkall_ClassificationConstantMean_1989}).

To achieve the goal, we first review in Section~\ref{sect:conf} the integrable theory of polarised curves in the conformal $2$-sphere, centered around a $1$-parameter family of flat connections \cite{burstall_SemidiscreteIsothermicSurfaces_2016}.
Then we reiterate the integrable hierarchy of polarised curves in Section~\ref{sect:pcq} given by the order of \emph{polynomial conserved quantities} \cite{burstall_SpecialIsothermicSurfaces_2012} (see also \cite{cho_ConstrainedElasticCurves_2023}).
These are parallel sections of the family of flat connections depending affine polynomially on the parameter.

Polynomial conserved quantities play a key role in Chapter~\ref{sect:monopcq}, where we prove in Theorem~\ref{thm:respcq} that every closed finite type polarised curve with a polynomial conserved quantity admits a resonance point, subject to a non-orthogonality condition imposed on the polynomial conserved quantity.
Finally, in Chapter~\ref{sect:monolcq}, we show in Theorem~\ref{thm:lcq} that every finite type curve polarised by arc-length with respect to a space form metric admits a linear conserved quantity.
This leads us to conclude in Theorem~\ref{thm:resonanceLCQ} that every curve polarised by arc-length with respect to a space form metric admits a resonance points on a multiple cover.
Our result applies to all finite type \emph{constrained elastic curve}, which are curves that are critical points of the bending energy under variations that fix length and enclosed area (Corollary~\ref{coro:cec}).

\textbf{Acknowledgements.}
The authors would like to thank Fran Burstall for useful discussions involving the subject.
We gratefully acknowledge the partial support from: London Mathematical Society (Research in Pairs Scheme Ref:42254); Research Institute for Mathematical Sciences, an International Joint Usage/Research Center located in Kyoto University (Workshop (Type B)); MEXT Promotion of Distinctive Joint Research Center Program JPMXP0723833165 (Joint Usage/Research (C) and International Joint Research (W) at Osaka Central Advanced Mathematical Institute); Japan Society for the Promotion of Science (Grant-in-Aid for Scientific Research (C) 24K06722); and Handong Global University (New Faculty Grant).
Additionally, the second author gratefully acknowledges that the work was finalised during a study leave granted by the University of Leicester.

\section{Preliminaries}
In this section, we review the integrable theory of polarised curves in conformal geometry \cite{burstall_SemidiscreteIsothermicSurfaces_2016}.
\subsection{Lightcone model of conformal geometry}\label{sect:conf}
Let $\mathbb{R}^{3,1}$ denote the pseudo-Euclidean space equipped with symmetric bilinear form $(\cdot, \cdot)$ with signature $(- + + +{})$, and let
    \[
        \mathcal{L} := \{ X \in \mathbb{R}^{3,1}: (X,X) = 0\}
    \]
be the lightcone of $\mathbb{R}^{3,1}$.
For some $\mathfrak{q} \in \mathbb{R}^{3,1}$ with $(\mathfrak{q}, \mathfrak{q}) = -\kappa$, if $M$ is given by
    \[
        M := \{X \in \mathcal{L}: (X, \mathfrak{q}) = -1\},
    \]
then $M$ is a $2$-dimensional Riemannian space form with constant sectional curvature $\kappa$.
For this reason, $\mathfrak{q}$ is referred to as \emph{space form vector}.
We will explain this in detail to set the notations to be used throughout.

\subsubsection{Euclidean space form}
Let us first choose a space form vector $\check{\mathfrak{q}} \in \mathcal{L}$, and consider
    \[
        \check{M} := \{X \in \mathcal{L}: (X, \check{\mathfrak{q}}) = -1\}.
    \]
Then fixing some $\mathfrak{o} \in \mathcal{L}$ such that $(\mathfrak{o}, \check{\mathfrak{q}}) = -1$ gives us
    \[
        \langle \mathfrak{o}, \check{\mathfrak{q}} \rangle \cong \mathbb{R}^{1,1},
    \]
so that
    \[
         \check{\mathfrak{R}} := \langle \mathfrak{o}, \check{\mathfrak{q}} \rangle^\perp \cong \mathbb{R}^2.
    \]
For any $x \in \mathfrak{R}$, we will write
    \[
        |x|^2 := (x,x).
    \]
Under this idenfication, we see that $\check{\psi} : \check{\mathfrak{R}} \to \check{M}$ defined by
    \[
        \check{X} := \check{\psi}(x) = x + \mathfrak{o} + \frac{1}{2}|x|^2 \check{\mathfrak{q}}
    \]
is a bijection.
Thus, we take $\check{\psi}$ to be the coordinate chart of $\check{M}$.

In particular, for any curve $x: (-\epsilon, \epsilon) \to \check{\mathfrak{R}}$ with $\check{X} = \check{\psi}(x)$, we can verify
    \[
        \check{X}' = x' + (x', x) \check{\mathfrak{q}},
    \]
so that
    \begin{equation}\label{eqn:0metric}
        (\check{X}', \check{X}') = |x'|^2.
    \end{equation}
Thus, $\check{M}$ is isometric to the usual Euclidean plane $\mathbb{R}^2$, and the map $\check{\phi} := \check{\psi}^{-1} : \check{M} \to \check{\mathfrak{R}}$ is referred to as the \emph{Euclidean space form projection}.

\subsubsection{Riemannian space forms with constant sectional curvature}
Now for any fixed $\kappa \in \mathbb{R}$, let us define the space form vector as
    \begin{equation}\label{eqn:qkappa}
        \mathfrak{q}_\kappa := \frac{1}{2}(\check{\mathfrak{q}} + 2 \kappa \mathfrak{o}),
    \end{equation}
so that
    \[
        (\mathfrak{q}_\kappa,\mathfrak{q}_\kappa) = \frac{1}{4}(4 \kappa (\check{\mathfrak{q}}, \mathfrak{o})) = -\kappa.
    \]
Defining
    \[
         M_\kappa := \{X \in \mathcal{L}: (X, \mathfrak{q}_\kappa) = -1\},
    \]
and
    \[
        \mathfrak{R}_\kappa := \{x \in \mathbb{R}^2 : |x|^2 \neq -\kappa^{-1}\},
    \]
we would like to find a local coordinate chart $\psi_\kappa : \mathfrak{R}_\kappa \to M_\kappa$.

For this, let $X_\kappa = \alpha_\kappa \check{X}$ for some $\check{X} \in \check{M}$ and $\alpha_\kappa \in \mathbb{R}$ for $X_\kappa \in M_\kappa$, and the condition that $(X_\kappa, \mathfrak{q}_\kappa) = -1$ allows us the calculate that
    \[
        \alpha_\kappa = \frac{2}{1 + \kappa |x|^2}.
    \]
Thus,
    \[
        \psi_\kappa(x) := \alpha_\kappa \psi(x) = \alpha_\kappa \check{X}
    \]
gives a local coordinate chart of $M_\kappa$.

Now, if $X_\kappa : (-\epsilon, \epsilon) \to M_\kappa$ is a curve, then
    \[
        X_\kappa' = \alpha_\kappa' \check{X} + \alpha_\kappa \check{X}',
    \]
and
    \begin{equation}\label{eqn:kappametric}
        (X_\kappa', X_\kappa') = \alpha^2 (\check{X}', \check{X}') =  \frac{4}{(1 + \kappa |x|)^2} |x'|^2.
    \end{equation}
Therefore, $M_\kappa$ (and thus $\mathfrak{R}_\kappa$) is a Riemannian space form with constant sectional curvature $\kappa$ (where $\mathfrak{R}_\kappa$ is the stereoprojection of the curved spaces onto the $2$-plane).
Denoting by $\phi_\kappa := \psi_\kappa^{-1} : M_\kappa \to \mathfrak{R}_\kappa$, we will call $x := \phi_\kappa(X_\kappa)$ a \emph{space form projection}.

\subsubsection{Conformal $2$-sphere and M\"obius transformations} $M_\kappa$ for different values of $\kappa$ are all conformally equivalent, and can be identified under stereoprojections; thus, the projective lightcone
    \[
        \mathbb{P}(\mathcal{L}) := \{ L = \langle X \rangle : X \in \mathcal{L} \}
    \]
represents the \emph{conformal $2$-sphere}.

Under this setting, the set of orthogonal transformations $\mathrm{O}(3,1)$ acts on the conformal $2$-sphere as M\"obius transformations.
Throughout the notes, we will identify $\wedge^2 \mathbb{R}^{3,1}$ with $\mathfrak{o}(3,1)$ where for $x, y \in \mathbb{R}^{3,1}$, $x \wedge y \in \mathfrak{o}(3,1)$ is evaluated via
    \[
        (x \wedge y) v = (x,v) y - (y, v) x
    \]
for any $v \in \mathbb{R}^{3,1}$.

\begin{example}[Poincar\'{e} disk model of hyperbolic plane]\label{exam:disk}
    If we normalize so that $\check{\mathfrak{q}} = (1,0,0, -1)^t$ with $\mathfrak{o} = \tfrac{1}{2}(1,0,0,1)^t$, then for any $x = (0, x_1, x_2, 0)^t \in  \langle \mathfrak{o}, \check{\mathfrak{q}} \rangle^\perp \cong \mathbb{R}^2$, we can verify
        \begin{equation}\label{eqn:x0param}
            \check{X} = \check{\psi}(x) = \begin{pmatrix}
                \tfrac{1}{2}(1 + |x|^2))\\ x_1 \\ x_2 \\ \tfrac{1}{2}(1 - |x|^2)
            \end{pmatrix}.
        \end{equation}
        
    Choosing $\kappa = -1$ gives us
        \[
            \alpha_{-1} = \frac{2}{1 - |x|^2},
        \]
    and
        \[
            X_{-1} = \alpha_{-1} \check{X} = \frac{2}{1 - |x|^2}\begin{pmatrix}
                \tfrac{1}{2}(1 + |x|^2)\\ x_1 \\ x_2 \\ \tfrac{1}{2}(1 - |x|^2).
            \end{pmatrix} = \begin{pmatrix}
               \frac{1 + |x|^2}{1 - |x|^2}\\ \frac{2 x_1}{1 - |x|^2} \\ \frac{2 x_2}{1 - |x|^2} \\ 1
            \end{pmatrix}.
        \]
    Therefore, $M_{-1}$ gives the Poincar\'{e} disk model of hyperbolic plane when $|x|^2 < 1$, and is comprised of two copies of the hyperbolic plane $\mathbb{H}^2$.
\end{example}
\begin{example}[Poincar\'{e} half-plane model of hyperbolic plane]\label{exam:halfplane}
    In addition to the choices of $\check{\mathfrak{q}}$ and $\mathfrak{o}$ in Example~\ref{exam:disk}, let us choose $\tilde{\mathfrak{q}} := (0,0,-1,0)^t$ so that $(\tilde{\mathfrak{q}},\check{\mathfrak{q}}) = 0$ and $(\tilde{\mathfrak{q}}, \tilde{\mathfrak{q}}) = 1$, and define
        \[
            \tilde{M} := \{ X \in \mathcal{L} : (X, \tilde{\mathfrak{q}}) = -1\}.
        \]
    Defining $\tilde{\alpha}$ via $\tilde{\alpha}\check{X} := \tilde{X} \in \tilde{M}$, we have using \eqref{eqn:x0param}
        \[
            -1 = (\tilde{X}, \tilde{\mathfrak{q}}) = (\tilde{\alpha} \check{X}, \tilde{\mathfrak{q}})
            = -\tilde{\alpha} x_2.
        \]
    Hence, $\tilde{\psi} : \tilde{\mathfrak{R}} \to \tilde{M}$ is a bijection with $\tilde{\phi} := \tilde{\psi}^{-1} : \tilde{M} \to \tilde{\mathfrak{R}} := \{x = (x_1, x_2) \in \mathbb{R}^2 : x_2 \neq 0 \}$ where
        \[
            \tilde{\psi}(x) = \tilde{\psi}(x_1, x_2) = \tilde{X} = \tilde{\alpha} \check{X} = \frac{1}{x_2} \check{X} = \begin{pmatrix}
                \tfrac{1}{2 x_2}(1 + |x|^2)\\ \frac{x_1}{x_2} \\ 1 \\ \tfrac{1}{2x_2}(1 - |x|^2)
            \end{pmatrix}.
        \]
    Then since
        \[
            \tilde{X}' = \tilde{\alpha}'\check{X} + \tilde{\alpha} \check{X}',
        \]
    we have
        \[
            (\tilde{X}', \tilde{X}') = (\tilde{\alpha}'\check{X} + \tilde{\alpha} \check{X}', \tilde{\alpha}'\check{X} + \tilde{\alpha} \check{X}') = \tilde{\alpha}^2 (\check{X}', \check{X}') = \frac{|x'|^2}{x_2^2}.
        \]
    Therefore, $\tilde{M}$ gives the Poincar\'{e} half plane model when $x_2 > 0$, and is again comprised of two copies of the hyperbolic plane.
    In particular, we see that the disk model and half-plane model of hyperbolic plane are related by a M\"{o}bius transformation $A \in \mathrm{O}(3,1)$ such that $A \mathfrak{q}_{-1} = \tilde{\mathfrak{q}}$.
\end{example}

\subsection{Polarised curves with polynomial conserved quantities}\label{sect:pcq}
We now review the integrable theory of polarised curves within the lightcone model of conformal geometry as set forth in \cite{burstall_SemidiscreteIsothermicSurfaces_2016}.
Let $I$ denote some open interval in the reals, and consider regular curves defined on a polarised domain $x : (I, q) \to \mathbb{R}^2$, referred to as \emph{polarised curves} for some non-vanishing quadratic differential $q$.
When a coordinate $s \in I$ of the curve $s \in I$ is chosen, then the quadratic differential can be written as
	\[
		q = \frac{1}{m}\dif{s}^2
	\]
where $m$ is a non-vanishing real-valued function on $I$.

As the notion of polarised curves is conformally invariant, we can lift a polarised curve $x$ into the conformal $2$-sphere $L : (I, q) \to \mathbb{P}(\mathcal{L})$ via $L = \langle \check{X} \rangle$, viewed as a subbundle of the trivial bundle $\underline{\mathbb{R}}^{3,1} := I \times \mathbb{R}^{3,1}$.
Referring to $\check{X} \in \Gamma L$ such that $\check{X} : (I, q) \to \check{M}$ as the \emph{Euclidean lift}, the one-parameter family of associated flat connections \cite[Equation~(2.10)]{burstall_SemidiscreteIsothermicSurfaces_2016} of a polarised curve is
    \[
        \dif{}^t := \dif{} + t \eta
    \]
where $\eta \in \Omega^1(L \wedge L^\perp)$ given by
    \[
        \eta := q \frac{\check{X} \wedge \dif{\check{X}}}{(\dif{\check{X}}, \dif{\check{X}})}.
    \]
For any other $X \in \Gamma L$ so that $\check{X} = \alpha X$, we have
    \[
        \eta = q \frac{\alpha X \wedge (\dif{\alpha} X + \alpha \dif{X})}{(\alpha \dif{X}, \alpha \dif{X})}
            = q\frac{\alpha X \wedge \alpha \dif{X}}{(\alpha \dif{X}, \alpha \dif{X})}
            = q \frac{X \wedge \dif{X}}{(\dif{X}, \dif{X})},
    \]
telling us that $\eta$ is independent of the choice of section $X \in \Gamma L$.

For any two parallel sections $\sigma_1, \sigma_2$ of $\dif{}^t$, we have the following lemma:
\begin{lemma}\label{lem:par}
    Let $\sigma_1, \sigma_2$ be parallel sections of the associated family of flat connections $\dif{}^t$.
    Then $(\sigma_1,\sigma_2)$ is constant.
\end{lemma}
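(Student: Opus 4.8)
The plan is to differentiate $(\sigma_1, \sigma_2)$ and show the derivative vanishes using parallelism with respect to $\dif{}^t$ together with the structural fact that $\eta$ takes values in $L \wedge L^\perp$. First I would write out what being parallel means: $\dif{\sigma_i} = -t \eta \sigma_i$ for $i = 1, 2$. Then, since the ambient bilinear form $(\cdot,\cdot)$ is parallel with respect to the trivial connection $\dif{}$ on $\underline{\mathbb{R}}^{3,1}$, I have
\[
    \dif{}(\sigma_1, \sigma_2) = (\dif{\sigma_1}, \sigma_2) + (\sigma_1, \dif{\sigma_2}) = -t\left[ (\eta \sigma_1, \sigma_2) + (\sigma_1, \eta \sigma_2) \right].
\]

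The key step is then to observe that $\eta$ is pointwise a skew-symmetric endomorphism of $\mathbb{R}^{3,1}$ with respect to $(\cdot,\cdot)$: indeed $\eta \in \Omega^1(L \wedge L^\perp) \subset \Omega^1(\mathfrak{o}(3,1))$, and under the identification of $\wedge^2 \mathbb{R}^{3,1}$ with $\mathfrak{o}(3,1)$ given in the excerpt, every element $x \wedge y$ acts by $v \mapsto (x,v)y - (y,v)x$, which one checks directly is skew-adjoint: $(({x \wedge y})v, w) = (x,v)(y,w) - (y,v)(x,w) = -(v, (x \wedge y)w)$. Hence $(\eta \sigma_1, \sigma_2) = -(\sigma_1, \eta \sigma_2)$, the bracketed term above vanishes identically, and $\dif{}(\sigma_1,\sigma_2) = 0$, so $(\sigma_1, \sigma_2)$ is constant on $I$.

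There is essentially no obstacle here; the only thing to be careful about is the bookkeeping of where $\eta$ acts as a one-form versus as an endomorphism, i.e. writing $\eta\sigma_i$ for the contraction of the $\mathfrak{o}(3,1)$-valued one-form $\eta$ applied to the tangent vector paired against the section $\sigma_i$, and noting the computation is pointwise in the parameter $t$ so the conclusion holds for every $t$. I would also remark that the same computation shows the connections $\dif{}^t$ are metric, which is the conceptual reason behind the statement.
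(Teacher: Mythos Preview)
Your proof is correct and follows the same approach as the paper: differentiate $(\sigma_1,\sigma_2)$, substitute $\dif{\sigma_i} = -t\eta\sigma_i$, and conclude by the skew-symmetry of $\eta$. The paper's argument is essentially your one-line computation, stated without the explicit verification that $x \wedge y$ is skew-adjoint; your added remark that $\dif{}^t$ is a metric connection is exactly the conceptual content of the lemma.
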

\begin{proof}
    Since $\sigma_1, \sigma_2$ are $\dif{}^t$-parallel, we can calculate that
        \[
            \dif{(\sigma_1,\sigma_2)} = (\dif{\sigma_1}, \sigma_2) + (\sigma_1, \dif{\sigma_2}) = - (t \eta \sigma_1, \sigma_2) - (\sigma_1, t \eta \sigma_2) = 0
        \]
    since $\eta$ is skew-symmetric.
\end{proof}

Now we recall the definition of Darboux transformations of polarised curves \cite[Definition~2.5]{burstall_SemidiscreteIsothermicSurfaces_2016}:
\begin{definition}
    Let $L : (I,q) \to \mathbb{P}(\mathcal{L})$ be a polarized curve with associated family of flat connections $\dif{}^t$.
    If $\hat{L} : (I,q) \to \mathbb{P}(\mathcal{L})$ is a parallel section of $\dif{}^\mu$ for some non-zero constant $\mu \in \mathbb{R}$, that is, there is some $\hat{X} \in \Gamma \hat{L}$ such that
        \[
            \dif{}^\mu \hat{X} = (\dif{} + \mu \eta) \hat{X} = 0,
        \]
    we say that $\hat{L}$ is a \emph{Darboux transform} of $L$ with spectral parameter $\mu$, or that $L$ and $\hat{L}$ are a \emph{Darboux pair} with spectral parameter $\mu$.
\end{definition}

To see how the associated family of flat connections of Darboux transforms are given in terms of those of the given curve, consider the splitting of $\mathbb{R}^{3,1}$ via
    \begin{equation}\label{eqn:splitting}
        \mathbb{R}^{3,1} = L_1 \oplus (L_1 \oplus L_2)^\perp \oplus L_2 =: L_1 \oplus W \oplus L_2
    \end{equation}
for some distinct $L_1, L_2 \in \mathbb{P}(\mathcal{L})$.
Defining an orthogonal transformation $\Gamma_{L_1}^{L_2}(r) \in \mathrm{O}(3,1)$ via
    \[
        \Gamma_{L_1}^{L_2}(r)X =
            \begin{cases}
                rX, &\text{if }X \in L_2 \\
                X, &\text{if }X \in W \\
                \frac{1}{r}X, &\text{if }X \in L_1
            \end{cases}
    \]
for some real constant $r \in \mathbb{R}$, the associated family of flat connections of Darboux transforms $\hat{L}$ can be found as follows \cite[Lemma~2.6]{burstall_SemidiscreteIsothermicSurfaces_2016}:
\begin{fact}\label{fact:gauge}
    Let  $L, \hat{L} : (I, q) \to \mathbb{P}(\mathcal{L})$ be a Darboux pair with respect to $\mu$.
    If $\dif{}^t$ and $\hat{\dif{}}^t$ are the associated family of flat connections of $L$ and $\hat{L}$ respectively, then they are related by a gauge transformation
        \[
            \hat{\dif{}}^t = \Gamma_L^{\hat{L}} (1 - \tfrac{t}{\mu}) \bullet \dif{}^t.
        \]
\end{fact}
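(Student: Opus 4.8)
The plan is to verify the asserted identity of connections directly, by reducing it to a single algebraic identity of one-forms that is then checked blockwise against the splitting~\eqref{eqn:splitting} attached to the Darboux pair. Set $G := \Gamma_L^{\hat{L}}\bigl(1 - \tfrac{t}{\mu}\bigr)$ and $r := 1 - \tfrac{t}{\mu}$. Recalling that $G\bullet\dif{}^t$ is the connection whose parallel sections are the $G$-images of the $\dif{}^t$-parallel sections, so that $G\bullet\dif{}^t = \dif{} + tG\eta G^{-1} - (\dif{G})G^{-1}$, the assertion $\hat{\dif{}}^t = G\bullet\dif{}^t$ is equivalent to
\[
    \dif{G} = tG\eta - t\hat\eta G
\]
as an identity of $\End(\mathbb{R}^{3,1})$-valued one-forms, valid for $t \neq \mu$, where $G$ is invertible; the case $t = \mu$ then follows by continuity since both sides of $\hat{\dif{}}^t = G\bullet\dif{}^t$ are rational in $t$. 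Before that one checks that $G$ is well defined, i.e.\ that $L \oplus \hat{L}$ is non-degenerate pointwise: choosing a null $\dif{}^\mu$-parallel lift $\hat{X} \in \Gamma\hat{L}$ and any lift $\sigma \in \Gamma L$, the curves $L$ and $\hat{L}$ are pointwise distinct (since $\eta\sigma = 0$, a lift of $L$ is $\dif{}^\mu$-parallel only if its derivative vanishes, contradicting regularity), and since $\hat{X}$ is null while $\sigma^\perp/L$ is positive definite this forces $(\sigma, \hat{X}) \neq 0$ throughout. After rescaling $\sigma$ so that $(\sigma, \hat{X}) = 1$, we have $\mathbb{R}^{3,1} = \langle\sigma\rangle \oplus W \oplus \langle\hat{X}\rangle$ with $W := (L\oplus\hat{L})^\perp$ positive definite, and $\Gamma_L^{\hat{L}}(r)$ acts by $\sigma\mapsto\tfrac1r\sigma$, $\hat{X}\mapsto r\hat{X}$, and the identity on $W$.

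The next step is to record two structural facts that make the identity collapse. Since $\eta = q\,\tfrac{\sigma\wedge\dif{\sigma}}{(\dif{\sigma},\dif{\sigma})}$ we have $\eta\sigma = 0$, and writing $\dif{\sigma} = a\sigma + w$ with $a := (\dif{\sigma},\hat{X})$ and $w$ the $W$-valued part gives $\eta = \rho\,\sigma\wedge w$, where $\rho := q/(\dif{\sigma},\dif{\sigma})$. The Darboux equation $\dif{\hat{X}} = -\mu\,\eta\hat{X}$ then reads $\dif{\hat{X}} = -\mu\rho\,w$, so that $\hat{X}$ is governed by the \emph{same} one-form $w$; using $(\dif{\sigma},\dif{\sigma}) = (w,w)$ one computes $(\dif{\hat{X}},\dif{\hat{X}}) = \mu^2\rho^2(w,w) = \mu^2\rho\,q$, and therefore
\[
    \hat\eta = q\,\frac{\hat{X}\wedge\dif{\hat{X}}}{(\dif{\hat{X}},\dif{\hat{X}})} = -\tfrac1\mu\,\hat{X}\wedge w , \qquad \hat\eta\hat{X} = 0 .
\]
The symmetry between $\eta = \rho\,\sigma\wedge w$ and $\hat\eta = -\tfrac1\mu\,\hat{X}\wedge w$ --- both built from the single $W$-valued one-form $w$ --- is the heart of the argument.

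With these in hand, I would verify $\dif{G} = tG\eta - t\hat\eta G$ by evaluating both sides pointwise on the spanning set $\{\sigma\}\cup\Gamma W\cup\{\hat{X}\}$. Because $G\sigma = \tfrac1r\sigma$, $G\hat{X} = r\hat{X}$ and $G|_W = \Id$ with $r$ constant along the curve, the Leibniz rule gives $(\dif{G})\sigma = \tfrac1r\dif{\sigma} - G\dif{\sigma}$ and the analogous expressions on $\hat{X}$ and on a section $v\in\Gamma W$; expanding $\dif{\sigma}$, $\dif{\hat{X}}$, $\dif{v}$ in the splitting --- for $\dif{v}$ one reads off its two null components from $(\dif{v},\sigma) = -(v,\dif{\sigma})$ and $(\dif{v},\hat{X}) = -(v,\dif{\hat{X}})$ --- and inserting $\eta\sigma = 0 = \hat\eta\hat{X}$, $\eta\hat{X} = \rho w$, $\hat\eta\sigma = -\tfrac1\mu w$, each of the three cases reduces to the numerical identities $1 - r = \tfrac{t}{\mu}$ and $\tfrac1r - 1 = \tfrac{t}{\mu r}$ and matches exactly. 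This proves the identity, and hence the fact.

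The only genuine difficulty here is organisational: keeping the three summands of~\eqref{eqn:splitting} straight while differentiating sections of the moving subbundles $L$, $W$, $\hat{L}$, and using the normalisation $(\sigma,\hat{X}) = 1$ consistently throughout. Once the two structural identities for $\eta$ and $\hat\eta$ are established, no analytic content remains --- the verification is pointwise linear algebra on $I$.
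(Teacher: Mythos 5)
The paper offers no proof of this statement: it is imported verbatim as \cite[Lemma~2.6]{burstall_SemidiscreteIsothermicSurfaces_2016}, so there is nothing internal to compare against. Your direct verification is correct. I checked the two structural identities --- with $(\sigma,\hat X)=1$ one indeed gets $\eta=\rho\,\sigma\wedge w$, $\dif\hat X=-\mu\rho w$, $(\dif\hat X,\dif\hat X)=\mu^2\rho q$ and hence $\hat\eta=-\tfrac1\mu\hat X\wedge w$ --- and all three block evaluations of $\dif G = tG\eta - t\hat\eta G$ close up exactly as you claim (on $\sigma$ both sides give $\tfrac{t}{\mu r}w$, on $\hat X$ both give $t\rho w$, on $v\in\Gamma W$ both give $-\tfrac{t\rho(v,w)}{r}\sigma-\tfrac{t(v,w)}{\mu}\hat X$, using $(\dif v,\hat X)=\mu\rho(v,w)$ and $(\dif v,\sigma)=-(v,w)$). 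The observation that $\eta$ and $\hat\eta$ are built from the same $W$-valued one-form $w$ is exactly the right organising principle and is what makes the computation finite.

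Two caveats, neither fatal. First, your argument for pointwise distinctness of $L$ and $\hat L$ only rules out $\hat L\equiv L$: a $\dif{}^\mu$-parallel null line can still coincide with $L$ at isolated points. Indeed, if $f:=(\sigma,\hat X)$ vanishes at $s_0$ then $f'(s_0)=0$ but $f''(s_0)=-c\,(\sigma',\sigma')(s_0)\neq 0$, so $f$ can have isolated second-order zeros, e.g.\ by choosing the initial condition $\hat X(s_0)\in L(s_0)$. At such points $\Gamma_L^{\hat L}$ is undefined, so pointwise distinctness is really an implicit hypothesis of the Fact (part of what ``Darboux pair'' means in the source) rather than something you can derive; you should state it as such rather than claim to prove it. Second, at $t=\mu$ one has $r=0$ and $\Gamma_L^{\hat L}(0)$ is genuinely singular, so the right-hand side of the asserted identity is not defined there; the clean statement is for $t\neq\mu$, and ``by continuity'' does not literally repair this. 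Both points concern the precise reading of the statement rather than your computation, which is sound.
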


Also recall the definition of polynomial conserved quantities of a polarised curve \cite[Definition~3.5]{cho_ConstrainedElasticCurves_2023} (see also \cite[Definition~2.1]{burstall_SpecialIsothermicSurfaces_2012}):
\begin{definition}
    Let $L : (I,q) \to \mathbb{P}(\mathcal{L})$ be a polarized curve with associated family of flat connections $\dif{}^t$.
    We say that
        \[
            p^t := p_0 + p_1 t + \ldots + p_d t^d \in (\Gamma \underline{\mathbb{R}}^{3,1})[t]
        \]
    is a \emph{polynomial conserved quantity} of $L$ (or $\dif{}^t$) if $p^t$ is $\dif{}^t$-parallel.
\end{definition}

A straight-forward calculation comparing the coefficients shows the following (cf.\ \cite[Proposition~2.2]{burstall_SpecialIsothermicSurfaces_2012}, \cite[Lemma~3.6]{cho_ConstrainedElasticCurves_2023}):
\begin{lemma}\label{lemma:pcqcond}
    A polynomial in $t$ given by $p^t = \sum_{n=0}^d p_n t^n$ is a polynomial conserved quantity of $L$ with associated family of flat connections $\dif{}^t = \dif{} + t\eta$ if and only if
        \begin{enumerate}
            \item $p_0$ is constant,
            \item $\dif{p_n} + \eta p_{n-1} = 0$ for all $n \in \{1, \ldots, d\}$, and
            \item $p_d$ is perpendicular to both $X$ and $\dif{X}$ for any section $X \in \Gamma L$.
        \end{enumerate}
\end{lemma}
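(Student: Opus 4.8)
The plan is to substitute $p^t = \sum_{n=0}^d p_n t^n$ directly into the defining condition $\dif{}^t p^t = 0$ and compare coefficients of powers of $t$. Writing $\dif{}^t = \dif{} + t\eta$, one computes
\[
    \dif{}^t p^t = \sum_{n=0}^d (\dif{p_n})\, t^n + \sum_{n=0}^d (\eta\, p_n)\, t^{n+1},
\]
and regrouping by degree exhibits the coefficient of $t^0$ as $\dif{p_0}$, the coefficient of $t^n$ for $1 \le n \le d$ as $\dif{p_n} + \eta\, p_{n-1}$, and the coefficient of $t^{d+1}$ as $\eta\, p_d$. Since a polynomial in $t$ with coefficients in $\Gamma\underline{\mathbb{R}}^{3,1}$ vanishes identically if and only if each of its coefficients vanishes, $p^t$ is a polynomial conserved quantity exactly when $\dif{p_0} = 0$ --- equivalently $p_0$ is constant, as $\dif{}$-parallel sections of the trivial bundle are constant --- which is (1); when $\dif{p_n} + \eta\, p_{n-1} = 0$ for $1 \le n \le d$, which is (2); and when $\eta\, p_d = 0$.

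It remains to see that $\eta\, p_d = 0$ is equivalent to (3). Using $\eta = q\,\frac{X \wedge \dif{X}}{(\dif{X},\dif{X})}$ together with the identification of $\wedge^2 \mathbb{R}^{3,1}$ with $\mathfrak{o}(3,1)$, one has
\[
    \eta\, p_d = \frac{q}{(\dif{X},\dif{X})}\bigl( (X, p_d)\, \dif{X} - (\dif{X}, p_d)\, X \bigr).
\]
As $q$ is non-vanishing and $(\dif{X},\dif{X}) \neq 0$ by regularity, this vanishes precisely when $(X, p_d)\, \dif{X} = (\dif{X}, p_d)\, X$. Differentiating $(X,X) = 0$ gives $(X, \dif{X}) = 0$, while $(\dif{X}, \dif{X}) \neq 0$; hence $X$ and $\dif{X}$ are pointwise linearly independent, so the previous identity forces $(X, p_d) = 0$ and $(\dif{X}, p_d) = 0$. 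Conversely these two orthogonality relations give $\eta\, p_d = 0$ at once, so $\eta\, p_d = 0$ is equivalent to $p_d$ being perpendicular both to $X$ and to $\dif{X}$.

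Finally I would remark that condition (3) does not depend on the choice of section $X \in \Gamma L$: for $\tilde{X} = \alpha X$ with $\alpha$ nowhere zero one has $\dif{\tilde{X}} = \dif{\alpha}\, X + \alpha\, \dif{X}$, so $p_d \perp X$ if and only if $p_d \perp \tilde{X}$, and, granting this, $p_d \perp \dif{X}$ if and only if $p_d \perp \dif{\tilde{X}}$ --- in keeping with the already-noted section-independence of $\eta$. No step is a serious obstacle; the only point needing a little care is the pointwise linear independence of $X$ and $\dif{X}$, which is what lets one pass from the single vector equation $\eta\, p_d = 0$ to the two scalar conditions recorded in (3).
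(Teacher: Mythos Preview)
Your proof is correct and follows exactly the approach the paper indicates: expand $\dif{}^t p^t$ and compare coefficients of powers of $t$. The paper in fact omits the details, merely noting that ``a straight-forward calculation comparing the coefficients shows'' the result, so your write-up is a faithful (and more detailed) rendering of the intended argument, including the care you take with the pointwise linear independence of $X$ and $\dif{X}$ to unpack $\eta\,p_d = 0$.
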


Suppose that $p^t$ is a polynomial conserved quantity of $L$ so that
    \[
        \dif{}^t p^t = (\dif{} + t \eta) p^t = 0.
    \]
Denoting by $\hat{L}$ the Darboux transform of $L$ with spectral parameter $\mu$, if we define
    \[
        \hat{p}^t := (1 - \tfrac{t}{\mu}) \Gamma_L^{\hat{L}} (1 - \tfrac{t}{\mu}) p^t,
    \]
then by Fact~\ref{fact:gauge}, we have
    \[
        \hat{\dif{}}^t \hat{p}^t = (1 - \tfrac{t}{\mu}) \Gamma_L^{\hat{L}} (1 - \tfrac{t}{\mu}) \dif{}^t \left( \Gamma_L^{\hat{L}} (1 - \tfrac{t}{\mu})\right)^{-1}  \Gamma_L^{\hat{L}} (1 - \tfrac{t}{\mu}) p^t = 0,
    \]
so that $\hat{p}^t$ is a polynomial conserved quantity of $\hat{L}$.

In particular, using the splitting \eqref{eqn:splitting} of $\underline{\mathbb{R}}^{3,1}$ with $L$ and $\hat{L}$, we may write
    \begin{equation}\label{eqn:ptsplit}
        p^t = [p^t]_L + [p^t]_W + [p^t]_{\hat{L}},
    \end{equation}
and
    \[
        \hat{p}^t = [p^t]_L + (1 - \tfrac{t}{\mu}) [p^t]_W + (1 - \tfrac{t}{\mu})^2 [p^t]_{\hat{L}}.
    \]
Now, writing $p_d = [p_d]_L + [p_d]_W + [p_d]_{\hat{L}}$, Lemma~\ref{lemma:pcqcond} implies
    \[
        0 = (p_d, X) = ([p_d]_{\hat{L}}, X)
    \]
so that $[p_d]_{\hat{L}} = 0$, allowing us to observe that
    \[
        [p^t]_{\hat{L}} = \sum_{n=0}^{d-1} [p_n]_{\hat{L}} t^n
    \]
is a polynomial of degree at most $d-1$.

Thus it follows as in \cite[Theorem~3.2]{burstall_SpecialIsothermicSurfaces_2012}:
\begin{theorem}\label{thm:pcqp1}
    Let $L : (I,q) \to \mathbb{P}(\mathcal{L})$ be a polarized curve with associated family of flat connections $\dif{}^t$ with polynomial conserved quantity $p^t$ of order $d$.
    If $\hat{L}$ is a Darboux transform of $L$, then $\hat{L}$ admits a polynomial conserved quantity of order at most $d+1$.
\end{theorem}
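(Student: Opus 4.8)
The plan is to transport the given polynomial conserved quantity $p^t$ of $L$ directly to $\hat{L}$ by means of the explicit gauge transformation of Fact~\ref{fact:gauge}, and then to bound the degree of the resulting parallel section using the structural conditions of Lemma~\ref{lemma:pcqcond}. The construction is already foreshadowed in the discussion preceding the statement; what remains is to organise it.

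First I would set $\hat{p}^t := (1 - \tfrac{t}{\mu})\,\Gamma_L^{\hat{L}}(1 - \tfrac{t}{\mu})\,p^t$, where $\underline{\mathbb{R}}^{3,1} = L \oplus W \oplus \hat{L}$ is the splitting \eqref{eqn:splitting} with $L_1 = L$ and $L_2 = \hat{L}$. This splitting is legitimate because $L$ and $\hat{L}$ are pointwise distinct null lines, whence $(X,\hat{X}) \neq 0$ for $X \in \Gamma L$, $\hat{X} \in \Gamma\hat{L}$ (a totally isotropic $2$-plane cannot exist in signature $(-,+,+,+)$). Applying Fact~\ref{fact:gauge} exactly as in the computation given just before the statement yields $\hat{\dif{}}^t \hat{p}^t = 0$, so $\hat{p}^t$ is an $\hat{\dif{}}^t$-parallel section of $\underline{\mathbb{R}}^{3,1}$ depending a priori only rationally on $t$.

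Second, I would verify that $\hat{p}^t$ is genuinely polynomial in $t$ and estimate its degree. Writing $p^t = [p^t]_L + [p^t]_W + [p^t]_{\hat{L}}$ and using that $\Gamma_L^{\hat{L}}(r)$ scales the $\hat{L}$-summand by $r$, fixes $W$, and scales the $L$-summand by $r^{-1}$, one obtains
    \[
        \hat{p}^t = [p^t]_L + (1 - \tfrac{t}{\mu})[p^t]_W + (1 - \tfrac{t}{\mu})^2 [p^t]_{\hat{L}},
    \]
which has no negative powers of $t$: the factor $(1 - \tfrac t\mu)$ in front of $\Gamma_L^{\hat L}(1-\tfrac t\mu)$ exactly clears the pole on the $L$-summand. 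The terms $[p^t]_L$ and $(1 - \tfrac{t}{\mu})[p^t]_W$ have degrees at most $d$ and $d+1$. For the last term it suffices to show $[p^t]_{\hat{L}}$ has degree at most $d-1$, i.e. that $[p_d]_{\hat{L}} = 0$. This is where Lemma~\ref{lemma:pcqcond}(3) enters: since pairing any vector with the null section $X$ annihilates both the $L$- and $W$-summands of the splitting, we get $0 = (p_d, X) = ([p_d]_{\hat{L}}, X)$; as $[p_d]_{\hat{L}}$ is a multiple of $\hat{X}$ while $(\hat{X}, X) \neq 0$, this forces $[p_d]_{\hat{L}} = 0$. Hence $(1 - \tfrac t\mu)^2[p^t]_{\hat L}$ has degree at most $d+1$, and therefore so does $\hat{p}^t$.

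I do not anticipate a genuine obstacle here: the argument is essentially the bookkeeping of \cite[Theorem~3.2]{burstall_SpecialIsothermicSurfaces_2012} transcribed to the curve setting, and the only point carrying any content, the vanishing $[p_d]_{\hat{L}} = 0$, has already been reduced to Lemma~\ref{lemma:pcqcond}(3). The things to be careful about are conventions: keeping straight which of $L, \hat{L}$ plays the role of $L_1$ versus $L_2$ in $\Gamma_L^{\hat{L}}$, and hence confirming that the prefactor $(1 - \tfrac t\mu)$ is precisely what is needed to make $\hat{p}^t$ polynomial; if one also wishes $\hat{p}^t$ to be nontrivial, it suffices to note that for $t \neq \mu$ the operator $(1 - \tfrac t\mu)\,\Gamma_L^{\hat{L}}(1 - \tfrac t\mu)$ is invertible, so $\hat{p}^t$ is the zero polynomial only if $p^t$ is.
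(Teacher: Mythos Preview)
Your proposal is correct and follows essentially the same route as the paper: define $\hat{p}^t := (1-\tfrac{t}{\mu})\,\Gamma_L^{\hat L}(1-\tfrac{t}{\mu})\,p^t$, verify parallelism via Fact~\ref{fact:gauge}, expand along the splitting $L\oplus W\oplus\hat L$, and use Lemma~\ref{lemma:pcqcond}(3) to obtain $[p_d]_{\hat L}=0$ so that the degree is at most $d+1$. Your additional remarks on the legitimacy of the splitting and the nontriviality of $\hat{p}^t$ are correct elaborations but not part of the paper's argument.
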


Now, let $\hat{L}$ be a Darboux transform of $L$ with spectral parameter $\mu$, where $L$ admits a polynomial conserved quantity $p^t$ of order $d$, and further assume that
    \[
        (p^\mu, \hat{X}) = 0
    \]
for $\hat{X} \in \Gamma \hat{L}$.
Then using \eqref{eqn:ptsplit},
    \[
        0 = (p^\mu, \hat{X}) = ([p^\mu]_L, \hat{X}),
    \]
and since $[p^\mu]_L \in \Gamma L$, it follows that
    \[
        [p^\mu]_L = 0.
    \]
Therefore,
    \[
        [p^t]_L = (t-\mu) Q^t
    \]
for some $Q^t \in \Gamma(\underline{\mathbb{R}}^{3,1})[t]$, so that
    \[
        \tilde{p}^t := \Gamma_L^{\hat{L}} (1 - \tfrac{t}{\mu}) p^t
    \]
is a polynomial conserved quantity of $\hat{L}$ of order $d$.
Finally, using Lemma~\ref{lem:par}, it follows as in \cite[Theorem~3.1]{burstall_SpecialIsothermicSurfaces_2012}:
\begin{theorem}\label{thm:backlund}
    Let $L : (I,q) \to \mathbb{P}(\mathcal{L})$ be a polarized curve with associated family of flat connections $\dif{}^t$ with polynomial conserved quantity $p^t$ of order $d$.
    If $\hat{L}$ is a Darboux transform of $L$, such that $\hat{L} \perp p^\mu$ at one point, then $\hat{L}$ admits a polynomial conserved quantity of order $d$.
    Such $\hat{L}$ is referred to as a \emph{B\"{a}cklund-type Darboux transform} of $L$.
\end{theorem}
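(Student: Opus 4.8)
The plan is to upgrade the one-point orthogonality $\hat L \perp p^\mu$ to orthogonality on all of $I$, and then to verify that the gauged polynomial $\tilde p^t := \Gamma_L^{\hat L}(1 - \tfrac{t}{\mu})\, p^t$ --- which, by the gauge relation of Fact~\ref{fact:gauge}, is $\hat{\dif{}}^t$-parallel --- is genuinely a polynomial in $t$ with no pole at $t = \mu$, of degree $d$, hence the desired polynomial conserved quantity of $\hat L$. This is the natural refinement of Theorem~\ref{thm:pcqp1}: the extra orthogonality hypothesis is precisely what keeps the order from rising to $d+1$.

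First I would globalise the orthogonality. By definition of a Darboux transform there is $\hat X \in \Gamma \hat L$ with $\dif{}^\mu \hat X = 0$, and since the parallelity identity $\dif{}^t p^t = 0$ holds for every $t$, it holds at $t = \mu$, so $p^\mu := p^t|_{t = \mu}$ is $\dif{}^\mu$-parallel. Lemma~\ref{lem:par} then gives that $(p^\mu, \hat X)$ is constant on $I$, so its vanishing at one point yields $(p^\mu, \hat X) \equiv 0$.

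Next I would extract the conserved quantity. Decomposing $p^t$ along the splitting $\underline{\mathbb{R}}^{3,1} = L \oplus W \oplus \hat L$ of \eqref{eqn:splitting} as in \eqref{eqn:ptsplit}, and using that $\hat L$ is isotropic and orthogonal to $W$, the identity $(p^\mu, \hat X) = ([p^\mu]_L, \hat X)$ together with the non-degeneracy of the pairing between the distinct null lines $L$ and $\hat L$ forces $[p^\mu]_L = 0$. Hence the $L$-component $[p^t]_L$, a polynomial in $t$ with coefficients in $\Gamma L$, is divisible by $t - \mu$: write $[p^t]_L = (t - \mu) Q^t$ with $\deg Q^t \le d - 1$. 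Since $\Gamma_L^{\hat L}(1 - \tfrac{t}{\mu})$ scales the $L$-, $W$- and $\hat L$-summands by $(1 - \tfrac{t}{\mu})^{-1}$, $1$ and $1 - \tfrac{t}{\mu}$ respectively, the factor $(1 - \tfrac{t}{\mu})^{-1}(t - \mu) = -\mu$ cancels the would-be pole and
\[
  \tilde p^t = -\mu\, Q^t + [p^t]_W + \left(1 - \tfrac{t}{\mu}\right)[p^t]_{\hat L};
\]
as $[p^t]_{\hat L}$ has degree $\le d - 1$ (because $[p_d]_{\hat L} = 0$ by Lemma~\ref{lemma:pcqcond}(3)), this is a polynomial of degree at most $d$. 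It is $\hat{\dif{}}^t$-parallel by Fact~\ref{fact:gauge}, hence a polynomial conserved quantity of $\hat L$; inspecting its leading coefficient, or conditions (1)--(3) of Lemma~\ref{lemma:pcqcond} applied to $\hat L$, confirms its order is $d$, as in \cite[Theorem~3.1]{burstall_SpecialIsothermicSurfaces_2012}.

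The only genuine point of care is the bookkeeping in the last step: one must confirm that the single factor $t - \mu$ pulled out of $[p^t]_L$ exactly cancels the single pole of the $(1 - \tfrac{t}{\mu})^{-1}$-action of $\Gamma_L^{\hat L}$ on the $L$-summand, so that $\tilde p^t$ comes out polynomial of degree $d$ and not $d + 1$. Granting that, everything else follows formally from Lemma~\ref{lem:par}, the decomposition \eqref{eqn:ptsplit}, and the gauge relation of Fact~\ref{fact:gauge}.
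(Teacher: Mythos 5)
Your proposal is correct and follows essentially the same route as the paper: globalising the orthogonality via Lemma~\ref{lem:par}, using the splitting \eqref{eqn:ptsplit} to deduce $[p^\mu]_L = 0$ and hence $[p^t]_L = (t-\mu)Q^t$, and then observing that $\tilde p^t = \Gamma_L^{\hat L}(1-\tfrac{t}{\mu})p^t$ is an $\hat{\dif{}}^t$-parallel polynomial of degree at most $d$. Your explicit bookkeeping of the cancellation $(1-\tfrac{t}{\mu})^{-1}(t-\mu) = -\mu$ is in fact slightly more detailed than the paper's own presentation.
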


\section{Monodromy of Darboux transforms of a closed curve with polynomial conserved quantities}\label{sect:monopcq}
Now assume that $L$ is a closed curve with period $T$.
To investigate the global properties of the curves its Darboux transformations, we view all polarised curves $L$ in the conformal $2$-sphere as a subbundle of the trivial bundle defined on the universal cover; however, we will abuse notation and identify the universal cover of $I$ with the interval $I$ throughout.

To investigate the monodromy of Darboux transforms, we choose some coordinates $s \in I$, and let $A^\mu : I \to \mathrm{SO}(3,1)$ be the unique solution given by the ordinary differential equation
    \[
        \frac{\dif{}}{\dif{s}} A^\mu = - \mu \eta A^\mu
    \]
with the initial condition $A^\mu(0) = id$ for any fixed $\mu \in \mathbb{R}$.
Under this setting, $\mathcal{M}^\mu := A^\mu(T)$ is called the \emph{monodromy matrix} of Darboux transforms of $L$ with spectral parameter $\mu$, and is real-analytic in $\mu \in \mathbb{R}$.
Thus, any Darboux transform $\hat{L} = \langle \hat{X} \rangle$ with spectral parameter $\mu$ can be found via
	\[
		\hat{X} = A^\mu \hat{X}_0
	\]
for any choice of $\hat{X}_0 \in \mathcal{L}$.
If, in addition, $\hat{X}_0$ is an eigenvector of $\mathcal{M}^\mu$, then $\hat{L}$ will become a closed Darboux transform with period $T$.

Assume throughout that the curve is of \emph{finite type}  (see, for example, \cite{hitchin_HarmonicMaps$2$torus_1990, pinkall_ClassificationConstantMean_1989}) so that the set 
    \[
        \{ \mu \in \mathbb{R} : \mathcal{M}^\mu \in \mathrm{SO}(3,1)\text{ is non-diagonalisable}\}
    \]
consists of finitely many points.
If $\mathcal{M}^\mu$ is diagonalisable with a single eigenvalue $1$ of multiplicity $4$, then $\mu$ is called a \emph{resonance point}.
When $\mu$ is a resonance point, every parallel section is an eigenvector of the monodromy matrix; thus, every Darboux transform is closed on the same period $T$ regardless of the choice of the initial condition.

Now let us further assume that $L$ admits a periodic polynomial conserved quantity $p^t$ sharing the same period as the polarised curve.
Then we have that $p^\mu$ is a periodic parallel section of $\dif{}^\mu$ implying $p^\mu(s) = A^\mu(s)p^\mu(0)$, and
    \[
        p^\mu(0) = p^\mu(T) = A^\mu(T)p^\mu(0) = \mathcal{M}^\mu p^\mu(0).
    \]
Hence, $p^\mu(0)$ is an eigenvector of the monodromy matrix $\mathcal{M}^\mu$ with corresponding eigenvalue $1$.

Diagonalisability of the monodromy matrix $\mathcal{M}^\mu$ implies the existence of two lightlike eigenvectors $\tilde{X}_1$ and $\tilde{X}_2$ with reciprocal real eigenvalues (see, for example, \cite[Chapter~III]{riesz_CliffordNumbersSpinors_1958}); therefore, let $\hat{L}_i := \langle \tilde{X}_i \rangle$ be the two closed Darboux transforms of $L$.
In particular, take $\hat{X}_i \in \Gamma \hat{L}_i$ for $i = 1, 2$ such that
    \[
        (\dif{} + \mu \eta)\hat{X}_i = 0,
    \]
and let $\alpha_i$ denote the corresponding eigenvalues, that is,
    \[
        \alpha_i \hat{X}_i(0) = \mathcal{M}^\mu \hat{X}_i(0).
    \]

If at least one of $\hat{X}_i$ is not perpendicular to $p^\mu$, that is, without loss of generality,
    \[
        (\hat{X}_1, p^\mu) \equiv c \neq 0
    \]
for some real constant $c$ due to Lemma~\ref{lem:par}, then we have using $\mathcal{M}^\mu \in \mathrm{SO}(3,1)$,
    \[
        \alpha_1 (\hat{X}_1(0), p^\mu(0)) = (\alpha_1 \hat{X}_1(0), p^\mu(T)) = (\mathcal{M}^\mu \hat{X}_1(0), \mathcal{M}^\mu p^\mu(0)) = (\hat{X}_1(0), p^\mu(0))
    \]
so that $\alpha_1 = 1$, and hence $\alpha_2 = \frac{1}{\alpha_1} = 1$.
Thus, if at least one of $\hat{X}_i$ is not perpendicular to $p^\mu$, then we have that
    \begin{equation}\label{eqn:eigenvec}
        p^\mu(0), \quad \hat{X}_1(0), \quad \hat{X}_2(0)
    \end{equation}
are all eigenvectors of $\mathcal{M}^\mu$, with eigenvalue $1$.

\textbf{Case (1):} If the three eigenvectors of $\mathcal{M}^\mu$ \eqref{eqn:eigenvec} are linearly independent, then the fourth eigenvalue must also equal to $1$ since $\det \mathcal{M}^\mu = 1$.
Thus, we have that $\mu$ is a resonance point.

\textbf{Case (2):} In contrast, if the three eigenvectors of $\mathcal{M}^\mu$ \eqref{eqn:eigenvec} are linearly dependent, then
    \[
        p^\mu(0) \in \langle \hat{X}_1(0), \hat{X}_2(0) \rangle =: \Phi \cong \mathbb{R}^{1,1},
    \]
so $\Phi$ has three fixed points under $\mathcal{M}^\mu$, and so $\mathcal{M}^\mu$ must act as the identity on $\Phi$.
Thus, viewing
    \[
        \mathbb{R}^{3,1} = \Phi \oplus \Phi^\perp \cong \mathbb{R}^{1,1} \oplus \mathbb{R}^2,
    \]
$\mathcal{M}^\mu$ must act as a rotation on $\Phi^\perp \cong \mathbb{R}^2$.
Therefore, the remaining two eigenvalues of $\mathcal{M}^\mu$ must be $\lambda^\mu, \bar{\lambda}^\mu$ for some $\lambda^\mu \in \mathbb{S}^1 \subset \mathbb{C}$ on $\Phi^\perp$.

\textbf{Case (2-a):} On one hand, if $\lambda^\mu$ is constant in $\mu$, then the real-analyticity of $\mathcal{M}^\mu$ implies that $\lambda^\mu$ and $\bar{\lambda}^\mu$ are roots to a second order polynomial equation with real-analytic coefficients. Thus, considering an appropriate analytic continuation of a square root term, it follows that $\lambda^\mu$ must be continuous in $\mu$.
However, since $\lambda^0 = 1$ as $\mathcal{M}^0 = id$, we must have that $\lambda^\mu \equiv 1$, and $\mu$ is a resonance point.

\textbf{Case (2-b):} On the other hand, suppose that $\lambda^\mu$ is continuous, but not constant in $\mu$.
Defining $\theta^\mu$ such that $e^{i \theta^\mu} := \lambda^\mu$, we note that since the set of rational numbers $\mathbb{Q}$ is dense over the set of real numbers, the non-constancy of $\theta^\mu$ tells us that there is some $\epsilon \in \mathbb{R}$ such that $\theta^{\mu + \epsilon} =: \theta^{\tilde{\mu}} \in \mathbb{Q}$.
For such $\tilde{\mu}$, there is some $\ell \in \mathbb{N}$ such that
    \[
        (\mathcal{M}^{\tilde{\mu}})^\ell = id,
    \]
so that $\tilde{\mu}$ is a resonance point over $\ell$-fold cover.

Summarizing all cases:
\begin{theorem}\label{thm:respcq}
    Let $L: (I, q) \to \mathbb{P}(\mathcal{L})$ be a closed polarised curve of finite type sharing the same period with a periodic polynomial conserved quantity $p^t$, and let $\mathcal{M}^\mu$ denote the monodromy matrix of Darboux transforms of $L$ with spectral parameter $\mu$.
    Let $\mu$ be chosen so that $\mathcal{M}^\mu$ is diagonalisable, and denote the two closed Darboux transforms of $L$ with spectral parameter $\mu$ by $\hat{L}_1, \hat{L}_2$.
    If we have
        \[
            (p^\mu, \hat{L}_1) \neq 0,
        \]
    then the eigenvalues of the monodromy matrix $\mathcal{M}^\mu$ must be
        \[
            1, \quad \lambda^\mu, \quad \bar{\lambda}^\mu
        \]
    for some $\lambda^\mu \in \mathbb{S}^1 \subset \mathbb{C}$.
    Furthermore, $L$ admits a resonance point on some multiple cover of $L$.
\end{theorem}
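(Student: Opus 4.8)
The plan is to combine linear algebra in $\mathbb{R}^{3,1}$ with a real-analyticity argument in the spectral parameter, the point being that the hypotheses force $\mathcal{M}^\mu$ to have two (and generically three) linearly independent fixed directions, which is very restrictive for an element of $\mathrm{SO}(3,1)$. First I would record these fixed directions. Periodicity of the conserved quantity makes $p^\mu$ a periodic $\dif{}^\mu$-parallel section, so $p^\mu(0) = \mathcal{M}^\mu p^\mu(0)$, with $p^\mu(0) \neq 0$ since $(p^\mu, \hat{L}_1) \neq 0$; and diagonalisability of $\mathcal{M}^\mu \in \mathrm{SO}(3,1)$ supplies two independent lightlike eigenvectors $\hat{X}_1(0), \hat{X}_2(0)$ spanning $\hat{L}_1, \hat{L}_2$, with reciprocal real eigenvalues $\alpha_1, \alpha_2 = 1/\alpha_1$. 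Writing $c := (p^\mu, \hat{X}_1)$, which is a nonzero constant by Lemma~\ref{lem:par}, and using $\mathcal{M}^\mu \in \mathrm{SO}(3,1)$ together with $p^\mu(T) = p^\mu(0)$,
\[
  \alpha_1 c = (\alpha_1 \hat{X}_1(0), p^\mu(T)) = (\mathcal{M}^\mu \hat{X}_1(0), \mathcal{M}^\mu p^\mu(0)) = (\hat{X}_1(0), p^\mu(0)) = c,
\]
so $\alpha_1 = 1$, hence $\alpha_2 = 1$, and $p^\mu(0), \hat{X}_1(0), \hat{X}_2(0)$ are all fixed by $\mathcal{M}^\mu$.

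Next I would split on the rank of these three vectors. If they are linearly independent, the fixed space of $\mathcal{M}^\mu$ is at least three-dimensional, so $\det \mathcal{M}^\mu = 1$ forces the fourth eigenvalue to equal $1$, and diagonalisability yields $\mathcal{M}^\mu = \mathrm{id}$: then $\mu$ is a resonance point with all eigenvalues $1$. If instead they are dependent, then $p^\mu(0) \in \Phi := \langle \hat{X}_1(0), \hat{X}_2(0) \rangle$, a Lorentzian plane because $\hat{L}_1 \neq \hat{L}_2$ makes $\hat{X}_1(0), \hat{X}_2(0)$ independent null vectors; since $\mathcal{M}^\mu$ fixes this basis of $\Phi$ it is the identity on $\Phi$, hence preserves $\Phi^\perp \cong \mathbb{R}^2$ and, being of determinant $1$, acts there as a rotation. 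Thus its eigenvalues are $1, 1, \lambda^\mu, \bar{\lambda}^\mu$ with $\lambda^\mu \in \mathbb{S}^1$, which is the spectral assertion.

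For the resonance point on a cover in the dependent case I would exploit real-analyticity of $\mu \mapsto \mathcal{M}^\mu$. Note that $p^{\mu'}(0)$ is a fixed vector of $\mathcal{M}^{\mu'}$ for every $\mu'$, that finite type makes $\mathcal{M}^{\mu'}$ diagonalisable off a finite set, and that the non-orthogonality persists for $\mu'$ near $\mu$, being an open condition; running the above argument on such $\mu'$ propagates the splitting $\{1, 1, \lambda^{\mu'}, \bar{\lambda}^{\mu'}\}$ to an interval, on which $\lambda^{\mu'}$ is a root — continued from the value $\lambda^0 = 1$ at $\mathcal{M}^0 = \mathrm{id}$ — of the real-analytic quadratic factor $T^2 - (\lambda^{\mu'} + \bar{\lambda}^{\mu'}) T + 1$ of the characteristic polynomial. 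If this $\lambda^{\mu'}$ is constant, the identity theorem forces $\lambda^{\mu'} \equiv 1$, so $\mathcal{M}^\mu = \mathrm{id}$ and $\mu$ is already a resonance point. If it is non-constant, then $\theta^{\mu'}$ with $e^{i\theta^{\mu'}} = \lambda^{\mu'}$ is continuous and non-constant, so it takes a value in $2\pi\mathbb{Q}$, and at a nearby diagonalisable $\tilde{\mu}$ the eigenvalue $\lambda^{\tilde{\mu}}$ is a primitive $\ell$-th root of unity; then $(\mathcal{M}^{\tilde{\mu}})^\ell = \mathrm{id}$, so $\tilde{\mu}$ is a resonance point on the $\ell$-fold cover of $L$.

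The step I expect to be the main obstacle is exactly this control of the $\mu$-dependence: keeping $\mu'$ inside the diagonalisable locus, carrying the spectral splitting across the finitely many exceptional parameters so as to anchor the branch of $\lambda^{\mu'}$ at $\mu' = 0$, and dealing with the zeros of the square-root discriminant. The remaining ingredients are elementary: linear algebra in $\mathbb{R}^{3,1}$, and the fact that a non-constant continuous circle-valued function takes values that are roots of unity.
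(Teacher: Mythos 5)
Your proposal is correct and follows essentially the same route as the paper: establish that $p^\mu(0)$, $\hat{X}_1(0)$, $\hat{X}_2(0)$ are fixed vectors of $\mathcal{M}^\mu$ via Lemma~\ref{lem:par} and the reciprocal-eigenvalue argument, split on their linear independence to get either $\mathcal{M}^\mu = \mathrm{id}$ or the splitting $\mathbb{R}^{1,1} \oplus \mathbb{R}^2$ with a rotation on $\Phi^\perp$, and then use continuity of $\lambda^\mu$ anchored at $\lambda^0 = 1$ together with density of rational angles to produce a resonance point on a multiple cover. Your treatment of the $\mu$-dependence (openness of the non-orthogonality condition, the quadratic factor of the characteristic polynomial, and writing $2\pi\mathbb{Q}$ where the paper writes $\mathbb{Q}$) is in fact slightly more careful than the paper's, but it is the same argument.
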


\section{Space form arc-length polarised curves}\label{sect:monolcq}
We now consider the integrable reduction to those curves polarised by space form arc-length polarisations.
\subsection{Curves polarised by space form arc-length polarisation}
First, we define space form arc-length polarisations in conformal geometry:
\begin{definition}
    Let $L : (I, q) \to \mathbb{P}(\mathcal{L})$ be a polarised curve, with $X \in \Gamma L$ such that $(X, \mathfrak{q}) = -1$ for some constant vector $
    \mathfrak{q}$.
    If we have
        \[
            q = (\dif{X}, \dif{X})
        \]
    then we call the polarisation a \emph{space form arc-length polarisation determined by $\mathfrak{q}$}.
    In particular, we will call the polarisation
        \begin{itemize}
            \item a \emph{spherical arc-length polarisation} if $(\mathfrak{q},\mathfrak{q}) < 0$,
            \item a \emph{Euclidean arc-length polarisation} if $(\mathfrak{q},\mathfrak{q}) = 0$, or
            \item a \emph{hyperbolic arc-length polarization} if $(\mathfrak{q},\mathfrak{q}) > 0$.
        \end{itemize}
\end{definition}
Otherwise said, we have a space form arc-length polarisation if the polarisation is arc-length with respect to the metric determined by some choice of $\mathfrak{q}$.
We will also say that the space form projection $x : (I, q) \to \mathbb{R}^2$ is polarised by space form arc-length if its lift $L : (I, q) \to \mathbb{P}(\mathcal{L})$ is polarised by space form arc-length.

\begin{example}
    Let $L : (I, q) \to \mathbb{P}(\mathcal{L})$ be polarised by space form arc-length determined by some $\check{\mathfrak{q}} \in \mathcal{L}$.
    As discussed in Section~\ref{sect:conf}, we use an auxiliary vector $\mathfrak{o} \in \mathcal{L}$ with $(\mathfrak{o},\check{\mathfrak{q}}) = -1$ to obtain a bijection $\check{\phi} : \check{M} \to \check{\mathfrak{R}}$.
    Denoting the Euclidean space form projection by $x := \check{\phi}(\check{X}) : (I, q) \to \mathbb{R}^2$, we then have by \eqref{eqn:0metric}
        \[
            q = (\dif{\check{X}}, \dif{\check{X}}) = |{\dif{x}}|^2,
        \]
    so that $x$ is polarised by (Euclidean) arc-length.
\end{example}

\begin{example}
    Now let $L : (I, q) \to \mathbb{P}(\mathcal{L})$ be polarised by space form arc-length determined by $\mathfrak{q}_\kappa$ as in \eqref{eqn:qkappa}.
    As discussed in Section~\ref{sect:conf}, we have a bijection $\phi_\kappa : M_\kappa \to \mathfrak{R}_\kappa$.
    Denoting by $x := \phi_\kappa(X_\kappa) : (I, q) \to \mathbb{R}^2$, we then have by \eqref{eqn:kappametric}
        \[
            q = (\dif{X}_\kappa, \dif{X}_\kappa) = \frac{4}{(1 + \kappa |x|^2)^2}|{\dif{x}}|^2.
        \]
    Thus, if $\kappa > 0$, then $x$ is polarised by spherical arc-length, while if $\kappa < 0$, then $x$ is polarised by hyperbolic arc-length, with the metric given by the stereoprojection.
\end{example}

\begin{example}
    Finally, let $L : (I, q) \to \mathbb{P}(\mathcal{L})$ be polarised by space form arc-length determined by $\tilde{\mathfrak{q}}$ as in Example~\ref{exam:halfplane}.
    Thus, using the bijection $\tilde{\phi} : \tilde{M} \to \tilde{\mathfrak{R}}$, we have $x = (x_1, x_2) := \tilde{\phi}(\tilde{X}) : (I, q) \to \mathbb{R}^2$, is polarised by
        \[
            q = (\dif{\tilde{X}}, \dif{\tilde{X}}) = \frac{1}{x_2^2}|{\dif{x}}|^2.
        \]
    Therefore, $x$ is polarised by hyperbolic arc-length, with the metric given by the Poincar\'{e} half-plane model.
\end{example}

In particular, we can characterise polarised curves with space form arc-length polarisations via the notion of polynomial conserved quantities of the associated family of flat connections.
\begin{theorem}\label{thm:lcq}
    Let $x : (I, q) \to \mathbb{R}^2$ be a polarised curve with its lift $L$.
    Then $x$ is polarised by space form arc-length determined by $\mathfrak{q}$ if and only if the lift $L$ of $x$ admits a linear conserved quantity of the form
        \begin{equation}\label{eqn:lcq}
            p^t = \mathfrak{q} + t X
        \end{equation}
    for $X \in \Gamma L$ taking values in $M := \{ X \in \mathcal{L} : (X, \mathfrak{q}) = -1 \}$.
\end{theorem}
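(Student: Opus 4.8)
The plan is to prove both implications by checking, term by term, the three conditions of Lemma~\ref{lemma:pcqcond} for the candidate linear polynomial $p^t=\mathfrak q+tX$, and the whole argument hinges on a single computation. Since $\eta=q\,\dfrac{X\wedge\dif{X}}{(\dif{X},\dif{X})}$ is independent of the chosen section of $L$, I would evaluate it on a section $X\in\Gamma L$ normalised by $(X,\mathfrak q)=-1$; differentiating this normalisation gives $(\dif{X},\mathfrak q)=0$, and hence, using $(x\wedge y)v=(x,v)y-(y,v)x$,
\[
    \eta\,\mathfrak q = q\,\frac{(X,\mathfrak q)\,\dif{X}-(\dif{X},\mathfrak q)\,X}{(\dif{X},\dif{X})} = -\,q\,\frac{\dif{X}}{(\dif{X},\dif{X})}.
\]
Thus condition~(2) of Lemma~\ref{lemma:pcqcond} for $p^t$, namely $\dif{X}+\eta\mathfrak q=0$, is equivalent to $\dif{X}=q\,\dfrac{\dif{X}}{(\dif{X},\dif{X})}$, i.e.\ to the coincidence of the quadratic differentials $q$ and $(\dif{X},\dif{X})$.

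For the ``only if'' direction I would start from the definition: a space form arc-length polarisation determined by $\mathfrak q$ supplies a section $X\in\Gamma L$ with $(X,\mathfrak q)=-1$ and $q=(\dif{X},\dif{X})$. Setting $p^t:=\mathfrak q+tX$, condition~(1) holds because $\mathfrak q$ is constant, condition~(2) is exactly the identity above combined with $q=(\dif{X},\dif{X})$, and condition~(3) is automatic: any section of the line bundle $L$ is a function multiple of $X$, so $(X,Y)=0$ and $(X,\dif{Y})=0$ for $Y\in\Gamma L$ follow from $(X,X)=0$ and $(X,\dif{X})=\tfrac12\dif{(X,X)}=0$. Hence $L$ admits a linear conserved quantity of the stated form.

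For the converse I would read the same conditions backwards. Given a linear conserved quantity $p^t=\mathfrak q+tX$ with $X\in\Gamma L$ taking values in $M$, condition~(1) tells me $\mathfrak q$ is constant (which is what licenses the differentiation $(\dif{X},\mathfrak q)=0$), and condition~(2) forces $\dif{X}=q\,\dfrac{\dif{X}}{(\dif{X},\dif{X})}$. Writing $q=\tfrac1m\dif{s}^2$ in a coordinate $s$, this becomes $X_s\bigl(1-\tfrac1{m(X_s,X_s)}\bigr)=0$, and since $L$ is a regular polarised curve (so $X_s\neq0$ and $(X_s,X_s)\neq0$) we get $(\dif{X},\dif{X})=\tfrac1m\dif{s}^2=q$; together with $(X,\mathfrak q)=-1$ this is precisely the space form arc-length condition.

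I do not expect a real obstacle here — the statement is essentially a repackaging of definitions — and condition~(3) of Lemma~\ref{lemma:pcqcond} plays no role in either direction beyond being automatically satisfied by $p_1=X\in\Gamma L$. The only point deserving care is the non-degeneracy built into the polarised-curve framework: $\dif{X}$ is nowhere a section of $L$ and $(\dif{X},\dif{X})$ never vanishes, which is what makes $\eta$ well defined and what allows cancelling $\dif{X}$ in the converse.
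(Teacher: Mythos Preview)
Your proposal is correct and follows essentially the same route as the paper: both reduce the claim to the three conditions of Lemma~\ref{lemma:pcqcond} for $p^t=\mathfrak q+tX$, observe that (1) and (3) are automatic, and compute $\dif X+\eta\mathfrak q=\bigl(1-\tfrac{q}{(\dif X,\dif X)}\bigr)\dif X$ using $(X,\mathfrak q)=-1$ and its differentiated consequence $(\dif X,\mathfrak q)=0$, so that condition~(2) is equivalent to $q=(\dif X,\dif X)$. Your write-up is in fact slightly more explicit about the non-degeneracy needed to cancel $\dif X$ in the converse direction.
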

\begin{proof}
    Let
        \[
            p^t = p_0 + t p_1 := \mathfrak{q} + t X,
        \]
    and write $\eta$ in terms of $ X \in \Gamma L$ such that $X$ takes values in $M$, that is,
        \[
            \eta = q \frac{X \wedge \dif{X}}{(\dif{X}, \dif{X})}.
        \]
    Now, note that $p_0 = \mathfrak{q}$ is constant, and $p_1$ is perpendicular to $\breve{X}$ and $\dif{\breve{X}}$ for any $\breve{X} \in \Gamma L$, while
        \begin{align*}
            0 = \dif{p_1} + \eta p_0 &= \dif{X} + \eta \mathfrak{q}\\
                &= \dif{X} + q \frac{X \wedge \dif{X}}{(\dif{X}, \dif{X})} \mathfrak{q}\\
                &= \dif{X} + \frac{q}{(\dif{X}, \dif{X})}((X, \mathfrak{q}) \dif{X} - (\dif{X}, \mathfrak{q}) X )\\
                &= \dif{X} - \frac{q}{(\dif{X}, \dif{X})}\dif{X}\\
                &= \left(1 - \frac{q}{(\dif{X}, \dif{X})}\right) \dif{X}
        \end{align*}
    if and only if
        \[
            q = (\dif{X}, \dif{X}).
        \]
    Thus, Lemma~\ref{lemma:pcqcond} allows us to conclude that a given polarised curve admits a linear conserved quantity of the form \eqref{eqn:lcq} if and only if it is polarised by a space form arc-length polarisation determined by $\mathfrak{q}$.
\end{proof}

\subsection{Monodromy of Darboux transforms of space form arc-length polarised curves}
Theorem~\ref{thm:lcq} implies that any curve polarised by space form arc-length polarisation admits a periodic linear conserved quantity.
Thus, the curve generically admits a resonance point by Theorem~\ref{thm:respcq}, where the spectral parameter $\mu$ must satisfy an additional condition that
    \[
        (p^\mu, \hat{X}_i) \neq 0
    \]
for one of two closed Darboux transforms $\hat{X}_1, \hat{X}_2$ of $X$.
    
\begin{figure}
	\centering
	\begin{minipage}{0.4\linewidth}
		\includegraphics[width=\textwidth]{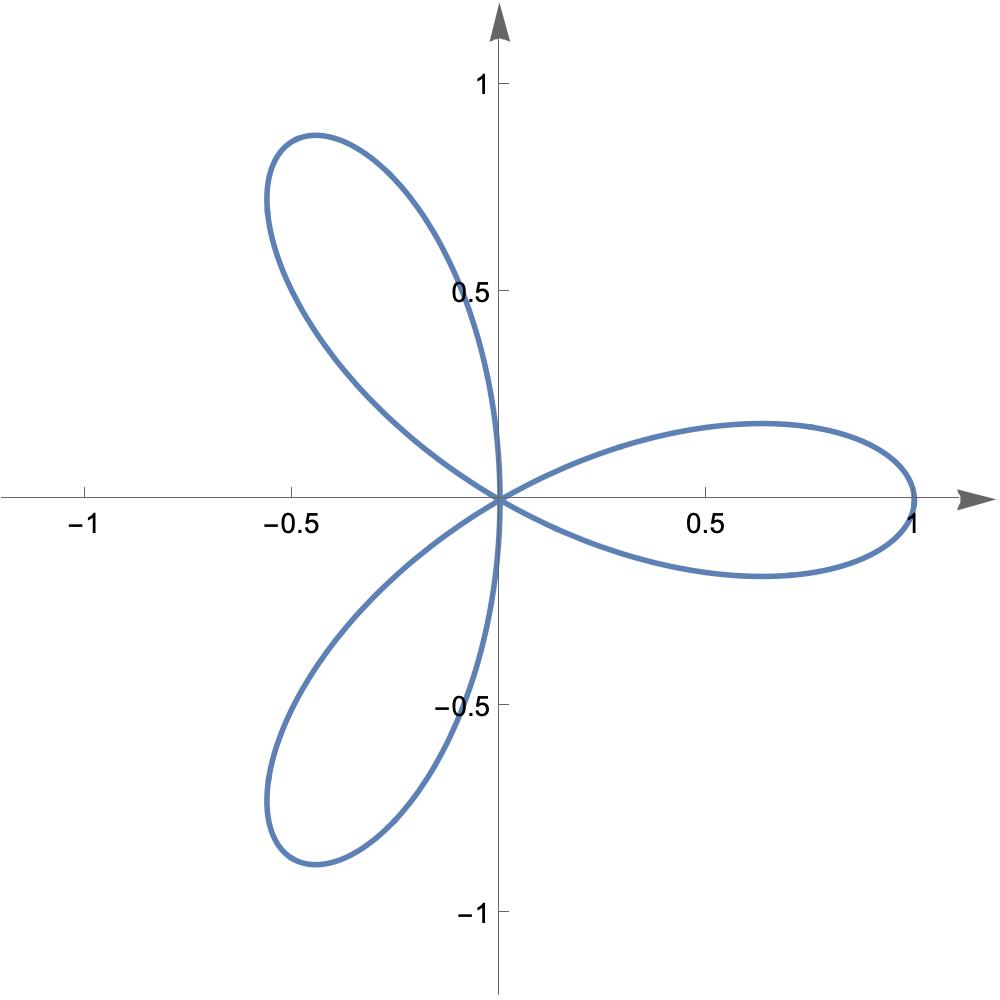}
		\[
			x(s) = \cos 3s (\cos s, \sin s)\]
	\end{minipage}
	\begin{minipage}{0.59\linewidth}
		\includegraphics[width=\textwidth]{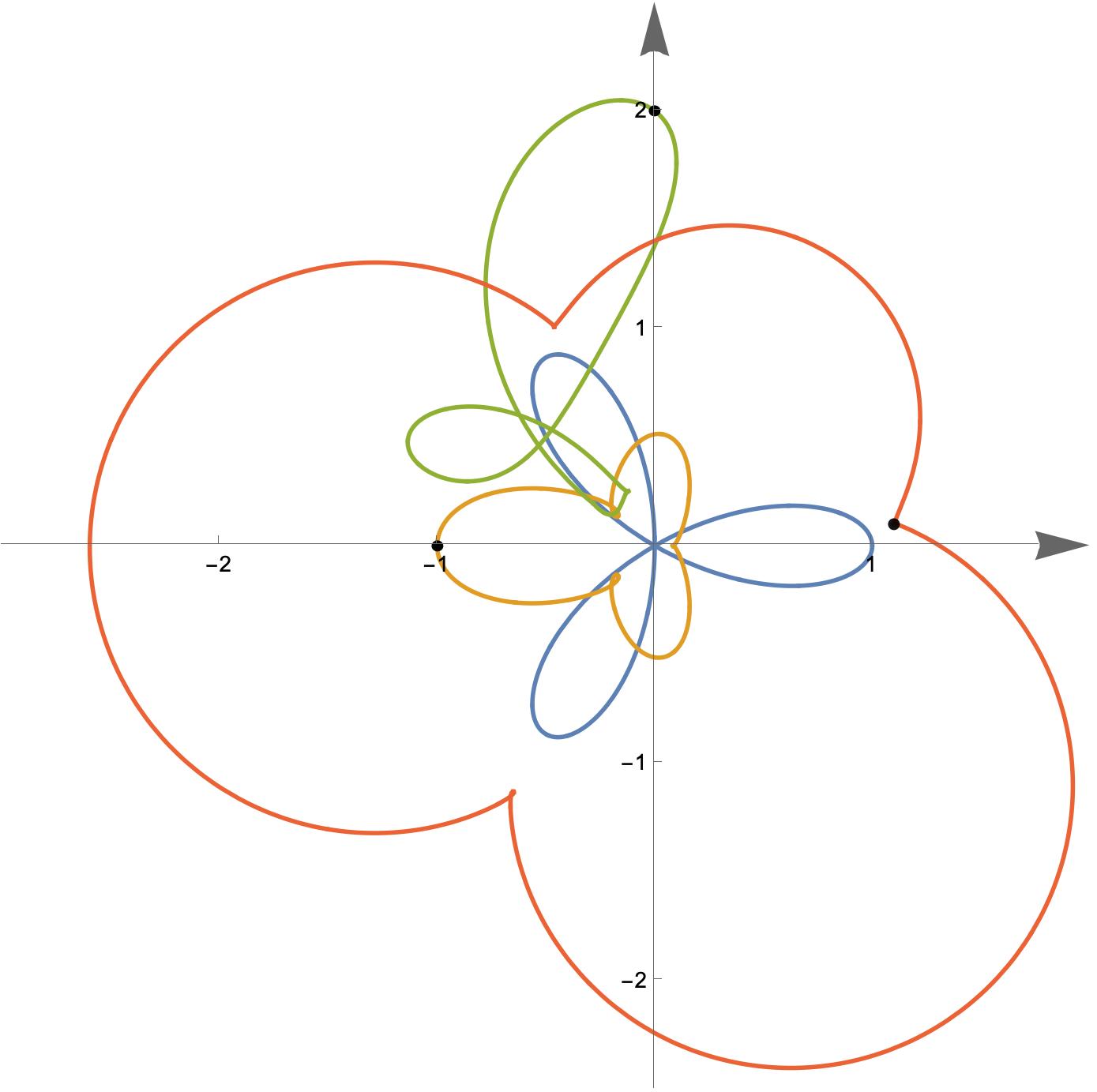}
	\end{minipage}
	\begin{minipage}{0.8\linewidth}
		\includegraphics[width=\textwidth]{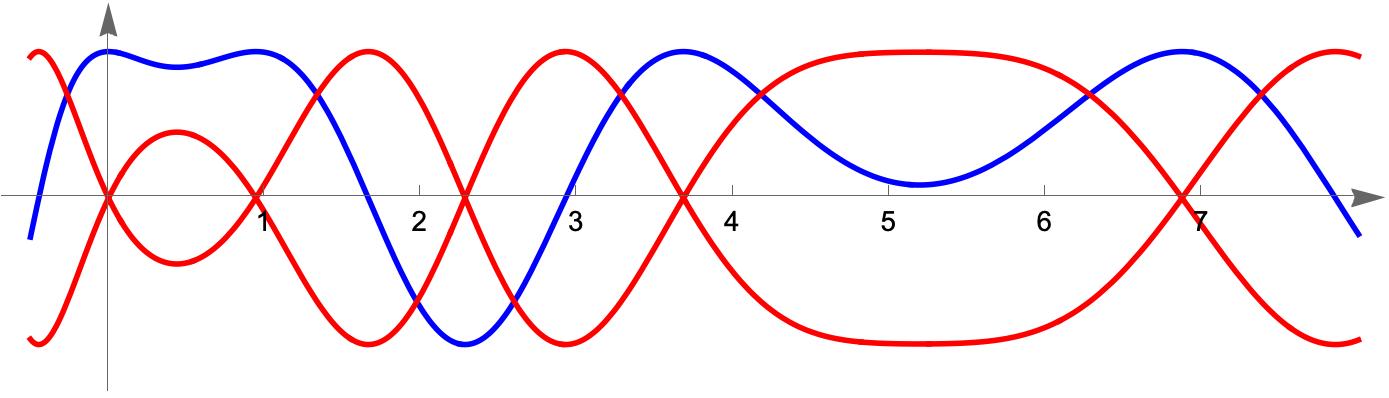}
	\end{minipage}
	\caption{Numerical example of Darboux transformations of a spherical arc-length polarised curve. The original curve (top left) is a rose curve given by the parametrisation stated; the real and imaginary parts of the complex conjugate pair of eigenvalues are plotted against the spectral parameter (on the bottom). The three Darboux transforms (top right) are closed over a single cover of the original curve, and are constructed using the resonance point $\mu \approx 0.94298625$.}
	\label{fig:exam1}
\end{figure}
	
\begin{figure}
	\centering
	\begin{minipage}{0.4\linewidth}
		\includegraphics[width=\textwidth]{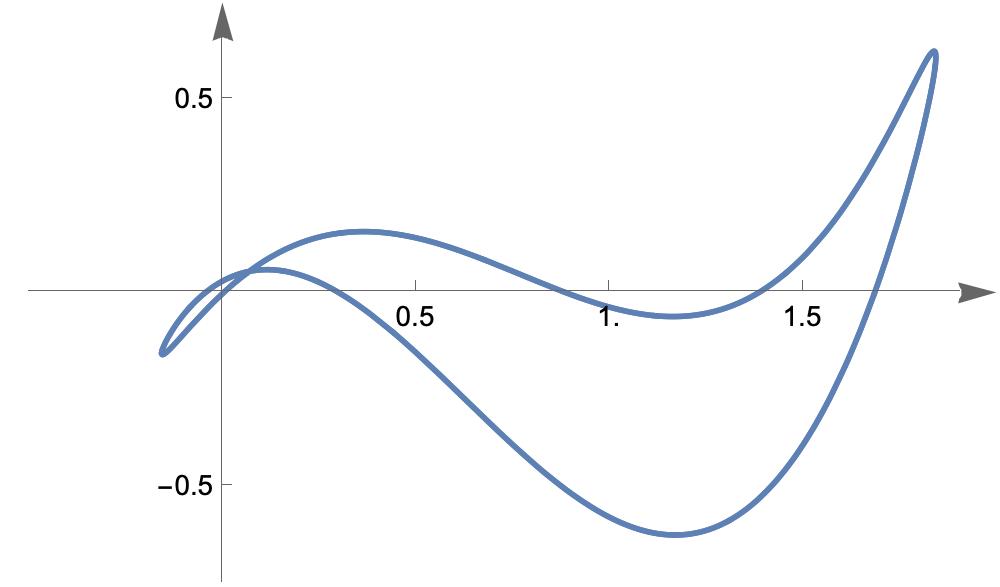}
		\begin{multline*}
			x(s) = (2\cos s \cdot \sin(s+1),\\
			 \sin 3s \cdot \cos 2s \cdot \cos s)\end{multline*}
	\end{minipage}
	\begin{minipage}{0.59\linewidth}
		\includegraphics[width=\textwidth]{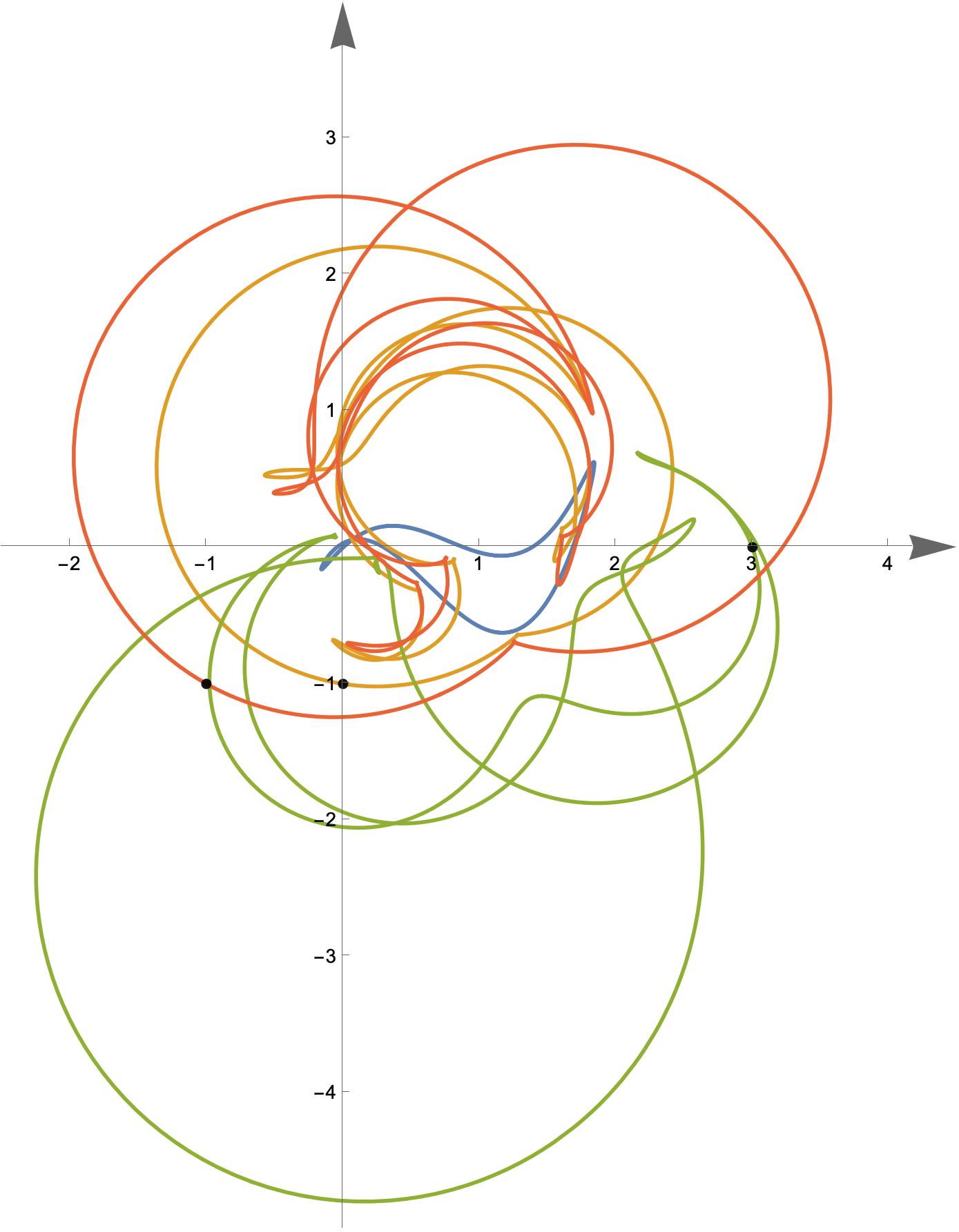}
	\end{minipage}
	\begin{minipage}{0.8\linewidth}
		\includegraphics[width=\textwidth]{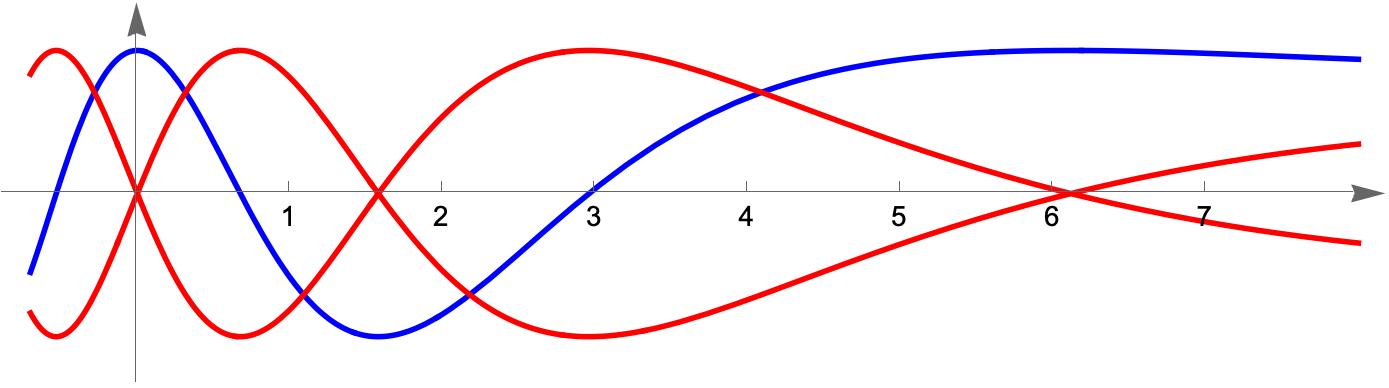}
	\end{minipage}
	\caption{Numerical example of Darboux transformations of a Euclidean arc-length polarised curve. The original curve (top left) is given by the parametrisation stated; the real and imaginary parts of the complex conjugate pair of eigenvalues are plotted against the spectral parameter (on the bottom). The three Darboux transforms (top right) are closed over a single cover of the original curve, and are constructed using the resonance point $\mu \approx 6.11654$.}
	\label{fig:exam2}
\end{figure}
To guarantee that at least one of closed Darboux transforms $\hat{X}_i$ is not perpendicular to $p^\mu$, assume without loss of generality that $L$ is a curve polarised by space form arc-length polarisation determined by some $\mathfrak{q}_\kappa$.
Then it admits a linear conserved quantity of the form
	\[
		p^t = \mathfrak{q}_\kappa + t X_\kappa,
	\]
and it follows that
    \begin{align*}
        (p^\mu,p^\mu)
            &= (\mathfrak{q}_\kappa + \mu X_\kappa, \mathfrak{q}_\kappa + \mu X_\kappa) \\
            &= (\mathfrak{q}_\kappa, \mathfrak{q}_\kappa) + 2\mu (\mathfrak{q}_\kappa, X_\kappa) + \mu^2 (X_\kappa, X_\kappa)
            = - \kappa - 2 \mu.
    \end{align*}
Thus, if we choose
    \[
        \mu > - \frac{\kappa}{2},
    \]
then we have $p^\mu$ is timelike throughout, so that lightlike $\hat{X}_i$ cannot be perpendicular to $p^\mu$.
Summarizing:
\begin{theorem}\label{thm:resonanceLCQ}
    Let $L: (I, q) \to \mathbb{P}(\mathcal{L})$ be a closed polarised curve of finite type with space form arc-length polarisation with respect to $\kappa$.
    Then $L$ admits a resonance point on some multiple cover of $L$ where the resonance point $\mu$ satisfies
        \[
            \mu > - \frac{\kappa}{2}.
        \]
\end{theorem}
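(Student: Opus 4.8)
The plan is to obtain the statement as a consequence of Theorem~\ref{thm:respcq}, so the real work is to exhibit a spectral parameter $\mu > -\kappa/2$ for which the hypotheses of that theorem — $\mathcal{M}^\mu$ diagonalisable and $(p^\mu,\hat{L}_1)\neq 0$ for one of the two closed Darboux transforms — are met. First I would normalise the space form vector: M\"obius transformations preserve the associated family of flat connections, the conjugacy class of the monodromy, and hence the resonance points, and $\mathrm{O}(3,1)$ acts transitively on the set of vectors $\mathfrak{q}$ with $(\mathfrak{q},\mathfrak{q})=-\kappa$ (including on $\mathcal{L}\setminus\{0\}$ when $\kappa=0$), so without loss of generality the polarisation is determined by $\mathfrak{q}_\kappa$ as in \eqref{eqn:qkappa}. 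Theorem~\ref{thm:lcq} then supplies the linear conserved quantity $p^t=\mathfrak{q}_\kappa+tX_\kappa$ with $X_\kappa\in\Gamma L$ valued in $M_\kappa$, and since $X_\kappa$ is periodic (as $L$ is closed) and $\mathfrak{q}_\kappa$ is constant, $p^t$ is a periodic polynomial conserved quantity with the same period $T$ as $L$; in particular Theorem~\ref{thm:respcq} applies to $L$.

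Next I would determine the causal type of $p^\mu$. From $(\mathfrak{q}_\kappa,\mathfrak{q}_\kappa)=-\kappa$, $(\mathfrak{q}_\kappa,X_\kappa)=-1$ and $(X_\kappa,X_\kappa)=0$ one computes $(p^\mu,p^\mu)=-\kappa-2\mu$, which is negative exactly when $\mu>-\kappa/2$. A timelike vector is orthogonal to no nonzero lightlike vector, so for every such $\mu$ at which $\mathcal{M}^\mu$ is diagonalisable, the timelike section $p^\mu$ is orthogonal to neither lightlike lift $\hat{X}_1,\hat{X}_2$ of the closed Darboux transforms; thus the non-orthogonality hypothesis of Theorem~\ref{thm:respcq} holds automatically there. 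By the finite type assumption only finitely many $\mu$ fail to be diagonalisable, so the open half-line $(-\kappa/2,\infty)$ contains parameters to which Theorem~\ref{thm:respcq} applies, and invoking it produces a resonance point on some multiple cover.

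What still needs care is that the resonance point so produced lies in $(-\kappa/2,\infty)$ rather than merely arising from a starting parameter there. In Cases (1) and (2-a) of the proof of Theorem~\ref{thm:respcq} the resonance point is the chosen parameter itself, so this is automatic; in Case (2-b) the resonance point on a multiple cover comes from finding $\tilde\mu$ with $\theta^{\tilde\mu}/(2\pi)\in\mathbb{Q}$, where $\lambda^\mu=e^{i\theta^\mu}\in\mathbb{S}^1$ is the rotating eigenvalue pair, and I would run that density argument within the interval $(-\kappa/2,\infty)$ itself. This is legitimate because on that half-line $\mathcal{M}^\mu$ is diagonalisable off a finite set, a nonconstant $\theta^\mu$ varies continuously (in fact real-analytically away from finitely many points) and hence assumes a rational-multiple-of-$2\pi$ value at some $\tilde\mu>-\kappa/2$, while a constant $\theta^\mu$ forces $\theta^\mu\equiv 0$ exactly as in Case (2-a). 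The main obstacle is therefore organisational rather than conceptual: making the well-definedness and continuity of $\theta^\mu$ — whose ambient plane $\Phi^\perp$ a priori moves with $\mu$ — precise along the half-line, so that the multiple-cover construction localises to the region $\mu>-\kappa/2$.
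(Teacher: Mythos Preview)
Your proposal is correct and follows essentially the same route as the paper: normalise to $\mathfrak{q}_\kappa$, invoke Theorem~\ref{thm:lcq} for the linear conserved quantity, compute $(p^\mu,p^\mu)=-\kappa-2\mu$ so that $p^\mu$ is timelike for $\mu>-\kappa/2$, and then appeal to Theorem~\ref{thm:respcq}. Your final paragraph in fact goes beyond the paper, which simply ``summarises'' after the timelike computation without explicitly verifying that the Case~(2-b) perturbation $\tilde\mu=\mu+\epsilon$ can be kept inside $(-\kappa/2,\infty)$; your observation that the density argument can be run entirely within that half-line is the right way to close that gap.
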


For numerical examples, see Figures~\ref{fig:exam1} and \ref{fig:exam2}.

Since any constrained elastic curves, which are the critical points of the bending energy under constrained length and area, admit  space form arc-length polarisations \cite[Theorem~3.15]{cho_ConstrainedElasticCurves_2023}, we obtain the following corollary:
\begin{corollary}\label{coro:cec}
    Let $L: (I, q) \to \mathbb{P}(\mathcal{L})$ be a closed polarised curve of finite type that projects to a constrained elastic curve in some appropriate space form of sectional curvature $\kappa$.
    Then $L$ admits a resonance point on some multiple cover of $L$ where the resonance point $\mu$ satisfies
        \[
            \mu > -\frac{\kappa}{2}.
        \]
\end{corollary}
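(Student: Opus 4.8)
The plan is to read this off from Theorem~\ref{thm:resonanceLCQ}: it suffices to check that a constrained elastic curve in a space form of sectional curvature $\kappa$, viewed as a polarised curve, carries a space form arc-length polarisation with respect to $\kappa$. This is precisely the content of \cite[Theorem~3.15]{cho_ConstrainedElasticCurves_2023}, which identifies the conformally invariant polarisation attached to a constrained elastic curve in the $\kappa$-space form with the arc-length polarisation of the ambient $\kappa$-metric; equivalently, in the notation of Section~\ref{sect:conf}, the lift $L$ admits a section $X_\kappa \in \Gamma L$ with $(X_\kappa, \mathfrak{q}_\kappa) = -1$ and $q = (\dif{X}_\kappa, \dif{X}_\kappa)$, where $\mathfrak{q}_\kappa$ is the space form vector of \eqref{eqn:qkappa}. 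By the definition of a space form arc-length polarisation preceding Theorem~\ref{thm:lcq}, this is exactly a space form arc-length polarisation with respect to $\kappa$.

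Granting this identification, the corollary is immediate: by hypothesis $L$ is closed and of finite type, so it satisfies all the hypotheses of Theorem~\ref{thm:resonanceLCQ}, which produces a resonance point $\mu$ on some multiple cover of $L$ with $\mu > -\tfrac{\kappa}{2}$. Unwinding the proof of Theorem~\ref{thm:resonanceLCQ}, Theorem~\ref{thm:lcq} furnishes the linear conserved quantity $p^t = \mathfrak{q}_\kappa + t X_\kappa$; since $(p^\mu, p^\mu) = -\kappa - 2\mu < 0$ whenever $\mu > -\kappa/2$, the section $p^\mu$ is timelike and therefore non-orthogonal to the two lightlike closed Darboux transforms $\hat{L}_1, \hat{L}_2$, so Theorem~\ref{thm:respcq} applies and yields the resonance point.

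I do not anticipate a real difficulty beyond bookkeeping. The one point that warrants care is matching conventions across the two papers: one must confirm that the curvature parameter $\kappa$ occurring in \cite{cho_ConstrainedElasticCurves_2023} is the same $\kappa$ that appears here via $(\mathfrak{q}_\kappa, \mathfrak{q}_\kappa) = -\kappa$ (so that a free elastic curve, $\kappa = 0$, corresponds to a Euclidean arc-length polarisation), and that the normalisation $(X_\kappa, \mathfrak{q}_\kappa) = -1$ is the one under which the bending-energy variational characterisation is stated there. Once these are aligned, no new computation is needed: the result is a direct specialisation of Theorem~\ref{thm:resonanceLCQ} to the class of constrained elastic curves.
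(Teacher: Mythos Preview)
Your proposal is correct and matches the paper's own argument: the paper simply invokes \cite[Theorem~3.15]{cho_ConstrainedElasticCurves_2023} to conclude that constrained elastic curves carry a space form arc-length polarisation, and then reads off the result from Theorem~\ref{thm:resonanceLCQ}. Your additional unpacking of the linear conserved quantity and the timelike condition is consistent with, but more detailed than, what the paper writes out.
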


\begin{remark}
	We note here that every result regarding curves polarised by space form arc-length, also applies to curves polarised by \emph{space form negative arc-length polarisation},
	\[
		q = -(\dif{X}, \dif{X}),
	\]
	with appropriate changes in signs.
	Namely, they also admit
	\begin{itemize}
		\item a linear conserved quantity of certain form (cf.\ Theorem~\ref{thm:lcq}), and
		\item a resonance point in the appropriate range (cf.\ Theorem~\ref{thm:resonanceLCQ}),
	\end{itemize}

\end{remark}

\begin{bibdiv}
\begin{biblist}

\bib{bernstein_NonspecialNoncanalIsothermic_2001}{article}{
      author={Bernstein, Holly},
       title={Non-special, non-canal isothermic tori with spherical lines of
  curvature},
        date={2001},
     journal={Trans. Amer. Math. Soc.},
      volume={353},
      number={6},
       pages={2245\ndash 2274},
      review={\MR{1814069}},
      doi={10.1090/S0002-9947-00-02691-X}
}

\bib{bobenko_CompactBonnetPairs_2021}{article}{
      author={Bobenko, Alexander~I.},
      author={Hoffmann, Tim},
      author={{Sageman-Furnas}, Andrew~O.},
       title={Compact {{Bonnet Pairs}}: Isometric tori with the same
  curvatures},
      eprint={2110.06335},
      url={http://arxiv.org/abs/2110.06335}
}

\bib{burstall_SemidiscreteIsothermicSurfaces_2016}{article}{
      author={Burstall, Francis~E.},
      author={{Hertrich-Jeromin}, Udo},
      author={M{\"u}ller, Christian},
      author={Rossman, Wayne},
       title={Semi-discrete isothermic surfaces},
        date={2016},
     journal={Geom. Dedicata},
      volume={183},
       pages={43\ndash 58},
      review={\MR{3523116}},
      doi={10.1007/s10711-016-0143-7}
}

\bib{burstall_SpecialIsothermicSurfaces_2012}{article}{
      author={Burstall, Francis~E.},
      author={Santos, Susana~D.},
       title={Special isothermic surfaces of type $d$},
        date={2012},
     journal={J. Lond. Math. Soc. (2)},
      volume={85},
      number={2},
       pages={571–591},
      review={\MR{2901079}},
      doi={10.1112/jlms/jdr050}
}

\bib{cho_GeneralisedBianchiPermutability_2022}{article}{
      author={Cho, Joseph},
      author={Leschke, Katrin},
      author={Ogata, Yuta},
       title={Generalised {{Bianchi}} permutability for isothermic surfaces},
        date={2022},
     journal={Ann. Global Anal. Geom.},
      volume={61},
      number={4},
       pages={799\ndash 829},
      review={\MR{4423125}},
      doi={10.1007/s10455-022-09833-5}
}

\bib{cho_NewExplicitCMC_2022}{article}{
      author={Cho, Joseph},
      author={Leschke, Katrin},
      author={Ogata, Yuta},
       title={New explicit {{CMC}} cylinders and same-lobed {{CMC}}
  multibubbletons},
      eprint={2205.14675},
      url={http://arxiv.org/abs/2205.14675}
}

\bib{cho_ConstrainedElasticCurves_2023}{article}{
      author={Cho, Joseph},
      author={Pember, Mason},
      author={Szewieczek, Gudrun},
       title={Constrained elastic curves and surfaces with spherical curvature
  lines},
        date={2023},
     journal={Indiana Univ. Math. J.},
      volume={72},
      number={5},
       pages={2059\ndash 2099},
      review={\MR{4671894}},
      doi={10.1512/iumj.2023.72.9487}
}

\bib{cieslinski_IsothermicSurfaces$mathbf_1995}{article}{
      author={Cieśliński, Jan},
      author={Goldstein, Piotr},
      author={Sym, Antoni},
       title={Isothermic surfaces in $\mathbf E^3$ as soliton surfaces},
        date={1995},
     journal={Phys. Lett. A},
      volume={205},
      number={1},
       pages={37–43},
      review={\MR{1352426}},
      doi={10.1016/0375-9601(95)00504-V}
}

\bib{costa_ExampleCompleteMinimal_1984}{article}{
      author={Costa, Celso~J.},
       title={Example of a complete minimal immersion in $\mathbf{R}^3$ of genus
  one and three embedded ends},
        date={1984},
     journal={Bol. Soc. Brasil. Mat.},
      volume={15},
      number={1-2},
       pages={47–54},
      review={\MR{794728}},
      doi={10.1007/BF02584707}
}

\bib{darboux_SurfacesIsothermiques_1899}{article}{
      author={Darboux, Gaston},
       title={Sur les surfaces isothermiques},
        date={1899},
     journal={C. R. Acad. Sci. Paris},
      volume={128},
       pages={1299\ndash 1305},
}

\bib{hitchin_HarmonicMaps$2$torus_1990}{article}{
      author={Hitchin, N.~J.},
       title={Harmonic maps from a $2$-torus to the $3$-sphere},
        date={1990},
     journal={J. Differential Geom.},
      volume={31},
      number={3},
       pages={627–710},
      review={\MR{1053342}},
      doi={10.4310/jdg/1214444631}
}

\bib{hoffman_CompleteEmbeddedMinimal_1985}{article}{
      author={Hoffman, David~A.},
      author={Meeks, William~H., III},
       title={A complete embedded minimal surface in $\mathbf{R}^3$ with genus one
  and three ends},
        date={1985},
     journal={J. Differential Geom.},
      volume={21},
      number={1},
       pages={109–127},
      review={\MR{806705}},
      doi={10.4310/jdg/1214439467}
}

\bib{hoffman_EmbeddedMinimalSurfaces_1990}{article}{
      author={Hoffman, David~A.},
      author={Meeks, William~H., III},
       title={Embedded minimal surfaces of finite topology},
        date={1990},
     journal={Ann. of Math. (2)},
      volume={131},
      number={1},
       pages={1\ndash 34},
      review={\MR{1038356}},
      doi={10.2307/1971506}
}

\bib{riesz_CliffordNumbersSpinors_1958}{book}{
      author={Riesz, Marcel},
       title={Clifford numbers and spinors ({{Chapters I}}--{{IV}})},
      series={Lecture {{Series}}},
   publisher={{University of Maryland, Institute for Fluid Dynamics and Applied
  Mathematics}},
     address={College Park, MD},
        date={1958},
      volume={No. 38},
      review={\MR{181951}},
}

\bib{pinkall_ClassificationConstantMean_1989}{article}{
      author={Pinkall, Ulrich},
      author={Sterling, Ivan},
       title={On the classification of constant mean curvature tori},
        date={1989},
     journal={Ann. of Math. (2)},
      volume={130},
      number={2},
       pages={407\ndash 451},
      review={\MR{1014929}},
      doi={10.2307/1971425}
}

\bib{sterling_ExistenceClassificationConstant_1993}{article}{
      author={Sterling, Ivan},
      author={Wente, Henry~C.},
       title={Existence and classification of constant mean curvature
  multibubbletons of finite and infinite type},
        date={1993},
     journal={Indiana Univ. Math. J.},
      volume={42},
      number={4},
       pages={1239\ndash 1266},
      review={\MR{1266092}},
      doi={10.1512/iumj.1993.42.42057}
}

\bib{wente_CounterexampleConjectureHopf_1986}{article}{
      author={Wente, Henry~C.},
       title={Counterexample to a conjecture of {{H}}. {{Hopf}}},
        date={1986},
     journal={Pacific J. Math.},
      volume={121},
      number={1},
       pages={193\ndash 243},
      review={\MR{815044}},
      doi={10.2140/pjm.1986.121.193}
}

\end{biblist}
\end{bibdiv}

\end{document}